\NewDocumentCommand{\oldnorm}{sO{}m}{%
  {\IfBooleanTF{#1}
    {\oldnormaux{\left|}{\right|}{#3}}
    {\oldnormaux{#2|}{#2|}{#3}}}
}
\newcommand{\oldnormaux}[3]{\mathpalette\oldnormaux@i{{#1}{#2}{#3}}}
\newcommand{\oldnormaux@i}[2]{\oldnormaux@ii#1#2}
\newcommand{\oldnormaux@ii}[4]{%
  \sbox\z@{$\m@th#1#2#4#3$}%
  \sbox\tw@{$\m@th\|$}%
  \mathopen{\hbox to\wd\tw@{\hss\vrule height \ht\z@ depth \dp\z@ width .3\wd\tw@\hss}}%
  #4
  \mathclose{\hbox to\wd\tw@{\hss\vrule height \ht\z@ depth \dp\z@ width .3\wd\tw@\hss}}%
}
\theoremstyle{plain}
\newtheorem{theorem}[equation]{Theorem}
\newtheorem{lemma}[equation]{Lemma}
\newtheorem{corollary}[equation]{Corollary}
\theoremstyle{definition}
\theoremstyle{remark}
\newtheorem{remark}[equation]{Remark}
\DeclareMathOperator{\supp}{supp}
\DeclareMathOperator{\Div}{div}
\DeclareMathOperator*{\esssup}{ess\,sup}
\providecommand{\set}[1]{\{#1\}}
\providecommand{\abs}[1]{\lvert#1\rvert}
\providecommand{\Abs}[1]{\left\lvert#1\right\rvert}
\providecommand{\norm}[1]{\lVert#1\rVert}
\newcommand{\bR}{\mathbb{R}}
\newcommand*{\tran}{^{\mkern-1.5mu\mathsf{T}}}
\renewcommand{\vec}[1]{\boldsymbol{#1}}
\numberwithin{equation}{section}
\begin{document}

% ------------------------------------------------------------------------

\title[Green's function for parabolic equations with lower order terms]{Green's function for second order parabolic equations with singular lower order coefficients}

\author[S. Kim]{Seick Kim}
\address[S. Kim]{Department of Mathematics, Yonsei University, 50 Yonsei-Ro, Seodaemun-gu, Seoul 03722, Republic of Korea.}
\email{kimseick@yonsei.ac.kr}
\thanks{S. Kim was partially supported by the National Research Foundation of Korea under agreements NRF-2019R1A2C2002724 and NRF-20151009350.}

\author[L. Xu]{Longjuan Xu}
\address[L. Xu]{Department of Mathematics, Yonsei University, 50 Yonsei-Ro, Seodaemun-gu, Seoul 03722, Republic of Korea.}
\email{ljxu311@163.com}

\begin{abstract}
We construct Green's functions for second order parabolic operators of the form $Pu=\partial_t u-{\rm div}({\bf A} \nabla u+ \boldsymbol{b}u)+ \boldsymbol{c} \cdot \nabla u+du$ in $(-\infty, \infty) \times \Omega$, where $\Omega$ is an open connected set in $\mathbb{R}^n$.
It is not necessary that $\Omega$ to be bounded and $\Omega = \mathbb{R}^n$ is not excluded.
We assume that the leading coefficients $\bf A$ are bounded and measurable and the lower order coefficients $\boldsymbol{b}$, $\boldsymbol{c}$, and $d$ belong to critical mixed norm Lebesgue spaces and satisfy the conditions $d-{\rm div} \boldsymbol{b} \ge 0$ and  ${\rm div}(\boldsymbol{b}-\boldsymbol{c}) \ge 0$.
We show that the Green's function has the Gaussian bound in the entire $(-\infty, \infty) \times \Omega$.
\end{abstract}

\maketitle

\section{Introduction}

In this paper, we are concerned with Green's functions of the second order parabolic equations of divergence form
\begin{align}		\label{main para eq}
Pu&=\partial_t u-\sum_{i,j=1}^{n}D_i (a^{ij}(t,x)D_j  u+b^i (t,x)u)+\sum_{i=1}^{n}c^i (t,x)D_i u+d(t,x)u\nonumber\\
&=\partial_t u-\Div({\bf A}\nabla u+\vec bu)+\vec c\cdot\nabla u+du
\end{align}
in a cylindrical domain $\mathscr D= (a,b) \times \Omega \subset \bR^{n+1}$, where $-\infty \le a <b\le +\infty$ and $\Omega$ is an open connected set in $\bR^n$ with $n \ge 1$.
It is not necessary that $\Omega$ to be bounded and $\Omega = \bR^n$ is not excluded.
In the case when $\Omega=\bR^n$, the Green's function is usually called the fundamental solution.

By Green's function for the operator $P$, we mean a function $G(t,x,s,y)$ which satisfies the following:
\begin{align*}
P G(\cdot,\cdot,s,y)=0 &\quad\text{ in }\; (s,\infty) \times \Omega, \\
G(t,x,s,y)=\delta_y(x) &\quad\text{ on }\; \set{t=s} \times \Omega,
\end{align*}
where $\delta_y(\cdot)$ is a Dirac delta function. See Theorem~\ref{thm_main} for the precise definition.

Before describing the remaining assumption on $P$, we introduce the function space $L_{p,q}(\mathscr D)$, the usual Lebesgue space with mixed norm.
Let $t$ denote points on the real line $\bR$ and $x=(x_1, \ldots, x_n)$ denote points in the $n$-dimensional Euclidean space $\bR^n$.
For $f \in L_{p,q}(\mathscr D)$ with $1 \le p$, $q<\infty$, we define
\[
\norm{f}_{p,q}= \norm{f}_{L_{p,q}(\mathscr D)}:= \left(\int_a^b \left(\int_{\Omega} \abs{f(t,x)}^{p}\ dx\right)^{q/p}\ dt\right)^{1/q}.
\]
In case either $p$ or $q$ is infinite, $\norm{f}_{p,q}$ is defined in a similar fashion using essential supremum rather than integrals.
We denote $L_{p,p}(\mathscr D)$ by $L_p(\mathscr D)$ and the norm $\norm{\cdot}_{L_{p,p}(\mathscr D)}$ by $\norm{\cdot}_{L_p(\mathscr D)}$.
Throughout the rest of the paper, we shall adopt the usual summation convention over repeated indices.

We assume that the coefficients of $P$ are defined in $\mathscr D=(-\infty,\infty) \times \Omega$ and satisfy the following conditions which will be referred to collectively as (H).
\begin{enumerate}[({H}1)]
\item
There exists a constant $\nu \in (0,1)$ such that for all $\vec \xi=(\xi_1,\ldots, \xi_n) \in \bR^n$ and for all $(t,x) \in \mathscr D$, we have
\begin{equation*}				%\label{ellipticity}
\nu \abs{\vec \xi}^2\le \mathbf{A}\vec \xi\cdot \vec \xi= a^{ij}(t,x)\xi_i \xi_j\quad \text{and}\quad
\sum_{i,j=1}^{n}\, \abs{a^{ij}(t,x)}^2\le \nu^{-2}.
\end{equation*}
\item
$\vec b=(b^1,\ldots, b^n)$, $\vec c=(c^1,\ldots, c^n)$ are contained in some $L_{p,q}(\mathscr D)$  and $d$ is contained  in some $L_{p/2,q/2}(\mathscr D)$ where $p$ and $q$ are such that 
\begin{equation*}					%\label{condition p q}
2\le p, q \le \infty\quad\text{and}\quad \frac{n}{p}+\dfrac{2}{q}=1.
\end{equation*}
There exists a constant $\Theta \ge 0$ such that
\begin{equation*}				%\label{theta}
\norm{\vec b-\vec c}_{L_{p,q}(\mathscr D)} \le \Theta.
\end{equation*}
\item
The following inequalities hold in the sense of distributions:
\[
d-\Div\vec b\geq 0\quad\text{and}\quad \Div(\vec b-\vec c)\geq0.
\]
\end{enumerate}

It should be noted that we deal with the ``critical'' mixed norm spaces in (H2) and in the case when $p=n$ and $q=\infty$, it can be weakened to $(\vec b -\vec c) 1_{\mathscr D} \in L_\infty(\mathrm{BMO}^{-1})$;  see \eqref{eq_bmo_x}.
The condition (H3) allows us to obtain the ``global ''energy inequality and also ``scale invariant'' local boundedness estimate for weak solutions.

The goal of this paper is to show that if $P$ satisfies the condition (H), then there exists the Green's function $G(t,x,s,y)$ and it has the following Gaussian bound: there exist constants $C=C(n, \nu, p, \Theta)$ and $\kappa=\kappa(n, \nu, \Theta)>0$ such that for all $t, s$ satisfying $-\infty<s<t<+\infty$ and $x$, $y \in \Omega$, we have
\begin{equation}		\label{eq1013sun}
\abs{G(t,x,s,y)} \le \frac{C}{(t-s)^{\frac{n}{2}}} \exp \left\{- \frac{\kappa \abs{x-y}^2}{t-s} \right\}.
\end{equation}

%-----history
We will  give some brief history regarding the Gaussian bounds for fundamental solutions of parabolic equations with measurable coefficients, starting with the case when there are no lower order terms present.
Since the groundbreaking work of Nash \cite{Nash}, where he established certain estimates of the fundamental solutions in proving H\"{o}lder continuity of weak solutions, there have been many important works in this field.
By employing the parabolic Harnack inequality of Moser \cite{Moser}, Aronson \cite{Aronson67} established two-sided Gaussian bounds for the fundamental solutions.
Fabes and Strook \cite{FS86} showed that the Nash's method could be used to prove Aronson's Gaussian bounds and as a consequence, they gave a new proof of Moser's parabolic Harnack inequality.

In the elliptic setting, Littman, Stampacchia, and Weinberger \cite{LSW} and Gr\"{u}ter and Widman \cite{GW82} studied Green's functions of elliptic equations in divergence form with measurable coefficients and showed that the Green's function $G(x,y)$ has a pointwise bound
\begin{equation}		\label{eq1014sun}
\abs{G(x,y)} \le C\abs{x-y}^{2-n}\quad (n\ge 3).
\end{equation}
Later, Hofmann and Kim \cite{HK07} gave an approach that also works for elliptic systems, where it is shown that the Green's function has pointwise bound \eqref{eq1014sun} if weak solutions of the elliptic system satisfy certain scale invariant H\"{o}lder continuity estimates.
Recently, Kim and Sakellaris \cite{KS19} studied Green's function of elliptic operators of the form
\begin{equation}			\label{eq1136sun}
Lu=-\Div (\mathbf{A}\nabla u+\vec{b} u)+\vec c\cdot\nabla u+du,
\end{equation}
where the principal coefficients $\mathbf{A}$ satisfy (H1), the lower order coefficients $\vec b$, $\vec c$, and $d$ satisfy (in the critical setting) the conditions that $\vec b$, $\vec c \in L_n$, $d \in L_{n/2}$, $d-\Div \vec b \ge 0$, and $d-\Div \vec c \ge 0$.
Assuming that $\Omega$ has a finite measure, they established pointwise bounds \eqref{eq1014sun} for the Green's function.

We would like to mention that our investigation is largely motivated by \cite{KS19}.
As it is well known, the Gaussian bound \eqref{eq1013sun} for the ``heat kernel'' of the elliptic operator $L$ yields the pointwise bound \eqref{eq1014sun} for the Green's function of $L$.
Therefore, in the elliptic context, our result says that if (H1) and (H3) hold and if $\vec b$, $\vec c \in L_n(\Omega)$ with $\norm{\vec b-\vec c}_{L_n} \le \Theta$ and $d \in L_{n/2}(\Omega)$, then the Green's function for the elliptic operator $L$ has the pointwise bound \eqref{eq1014sun} with constant $C=C(n, \nu, \Theta)$.
This gives a new proof for a result in \cite{KS19} dispensing with the assumption that $\abs{\Omega}<\infty$.

There are also many previous results in the literature regarding Gaussian bounds for fundamental solutions for parabolic equations with lower order terms.
To name a few, we mention \cite{Aronson68, QX1, QX2, Semenov, Zhang}.
However, there are very few in the literature dealing with global Gaussian bound \eqref{eq1013sun}.
For example, Aronson \cite{Aronson68} considered parabolic equations of the form \eqref{main para eq} with coefficients $\vec b$, $\vec c \in L_{p,q}$ and $d\in L_{p/2, q/2}$, where $2\le p, q \le \infty$ and $n/p+2/q<1$ (i.e. the subcritical case).
He obtained Gaussian bound for the fundamental solution without imposing the condition (H3) but the bound is not global in time.

For parabolic systems without lower order terms, Cho, Dong, and Kim \cite{CDK08} established the existence of Green's function in $(-\infty,\infty)\times \Omega$, under the assumption that weak solutions of the system satisfy certain scale invariant H\"{o}lder continuity estimate.
Recently, Dong and Kim \cite{DK18} extended the main result in \cite{CDK08} to parabolic systems with divergence free drift terms in the class of $L_\infty(\mathrm{BMO}^{-1})$.
In the scalar setting, the assumptions in \cite{DK18} read that $\vec b=0$, $\Div\vec c=0$, and $d\ge 0$, which obviously satisfy (H3).
As a matter of fact, in the proof of our main theorem, the only place where we strongly use the scalar property of weak solutions is in the the proof of Lemma~\ref{loc_bdd}, and as long as we have the local boundedness estimate in Lemma~\ref{loc_bdd}, essentially the same proof carries over to the systems.
Therefore, we recover the result in \cite{DK18} as a corollary. See Remark~\ref{rmk3.4}.

The organization of the paper is as follows.
In section~\ref{preliminary}, we introduce some notation and function spaces.
Then we prove the energy inequality which in turn implies the existence and uniqueness of weak solutions of Cauchy problems.
We present the main result in section~\ref{sec_main} and provide the proof in section~\ref{sec_proof}.
In section~\ref{sec7}, we prove the local boundedness estimate for weak solutions, which plays a key role in establishing the Gaussian bound.

\section{Preliminaries}\label{preliminary}
In this section, we first recall some frequently used notation and function spaces in \cite{LSU}. 
Then for reader's convenience, we give the definition of weak solutions to second order parabolic equations, and present some auxiliary estimates which will be used later.

\subsection{Notation and function spaces}
The adjoint operator $P^*$ of $P$ is defined by
\begin{equation}			\label{adj}
P^* u=-\partial_t u -\Div({\bf A}\tran\nabla u+\vec cu)+\vec b\cdot\nabla u+du,
\end{equation}
where ${\bf A}\tran$ is the transpose of $\bf A$.
Note that the coefficients ${\bf A}\tran=(a^{ji})$ satisfy the same ellipticity condition (H1).

We denote points in $\bR^{n+1}$ by $X=(t,x)$, $Y=(s,y)$, $X_0=(t_0,x_0)$, etc.
We define the ``parabolic distance'' between the points $X=(t,x)$ and $Y=(s,y)$ in $\bR^{n+1}$ as
\[
\abs{X-Y}=\max( \sqrt{\abs{t-s}},\abs{x-y}).
\]

For $U \subset \bR^{n+1}$, we write $U(t_0)$ for the set of all points $(t_0,x)$ in $U$ and $I(U)$ for the set of all $t$ such that $U(t)$ is nonempty.
We define
\begin{equation*}		%\label{eq_oldnorm}
\oldnorm{u}_{U}:= \norm{Du}_{L_2(U)}+\esssup\limits_{t\in I(U)}\, \norm{u(t,\cdot)}_{L_2(U(t))}.
\end{equation*}
In the rest of this section, we restrict ourselves to the case when $U$ is a cylindrical domain $\mathscr D= (a,b) \times \Omega$ with $-\infty < a <b < +\infty$.
We define the lateral boundary $\partial_x \mathscr D$ and the parabolic boundary $\partial_p \mathscr D$ of $\mathscr D$ by
\[
\partial_x \mathscr D= [a,b]\times \partial\Omega\quad\text{and}\quad \partial_p \mathscr D= \partial_x \mathscr D \cup \set{t=a}\times \Omega,
\]
respectively.
We denote
\[
W_2^{1,0}(\mathscr D):=\{u: u, Du\in L_2(\mathscr D)\}, \quad 
W_2^{1,1}(\mathscr D):=\{u: u, \partial_t u, Du\in L_2(\mathscr D)\}.
\]

We define $V_2(\mathscr D)$ as the Banach space consisting of all elements of $W_2^{1,0}(\mathscr D)$ having a finite norm
\[
\norm{u}_{V_2(\mathscr D)} := \oldnorm{u}_{\mathscr D}=\norm{Du}_{L_2(\mathscr D)}+  \norm{u}_{L_{2,\infty}(\mathscr D)}.
\]

We define $V_2^{1,0}(\mathscr D)$ as the Banach space consisting of all elements of $V_2(\mathscr D)$ which are continuous in $t$ in the norm of $L_2(\Omega)$, with the norm
\[
\norm{u}_{V_2^{1,0}(\mathscr D)} := \oldnorm{u}_{\mathscr D}=\norm{Du}_{L_2(\mathscr D)}+ \max_{a\leq t\leq b}\, \norm{u(t,\cdot)}_{L_2(\Omega)}.
\]
The space $V_2^{1,0}(\mathscr D)$ is obtained by completing the set $W_2^{1,1}(\mathscr D)$ in the norm of $V_2(\mathscr D)$.

We say that a function $u$ in $W_2^{1,0}(\mathscr D)$ vanishes on $S \subset \partial_x \mathscr D$ if $u$ is the limit in $W_2^{1,0}(\mathscr D)$ of functions from $C^{1,1}_c(\overline{\mathscr D}\setminus S)$, the set of all continuously differentiable functions with compact supports in $\overline{\mathscr D}\setminus S$.
We denote by $\mathring{W}_2^{1,0}(\mathscr D)$ the set of functions in $W_2^{1,0}(\mathscr D)$ that vanish on the lateral boundary $\partial_x \mathscr D$.
We define
\[
\mathring{V}_2(\mathscr D):=V_2(\mathscr D)\cap \mathring{W}_2^{1,0}(\mathscr D)\quad\text{and}\quad \mathring{V}_2^{1,0}(\mathscr D):=V_2^{1,0}(\mathscr D)\cap \mathring{W}_2^{1,0}(\mathscr D).
\]

\subsection{Embedding inequalities}
Let the exponents $\tilde p$ and $\tilde q$ satisfy \[\frac{n}{\tilde p}+\frac{2}{\tilde q}=\frac{n}{2}\] with 
\begin{equation}\label{cond q r embed}
\begin{cases}
\tilde p\in [2,\frac{2n}{n-2}] ,\quad \tilde q\in [2,\infty]&\quad \text{if }\;n\ge 3,\\
\tilde p\in [2,\infty), \quad \;\;\tilde q\in (2,\infty]&\quad \text{if }\;n=2,\\
\tilde p\in[2,\infty],\quad \;\;\tilde q\in[4,\infty]&\quad\text{if }\; n=1.
\end{cases}
\end{equation}
By a well-known embedding theorem, there exists a constant $\beta=\beta(n, \tilde p)$ such that 
\begin{equation}\label{embd_ineq}
\norm{u}_{L_{\tilde p,\tilde q}(\mathscr D)} \leq \beta \oldnorm{u}_{\mathscr D},\quad\forall u\in\mathring{V}_2(\mathscr D).
\end{equation}
We emphasize that the constant $\beta$ in \eqref{embd_ineq} is independent of $\mathscr D$.
In particular, if we take $\tilde p=\tilde q=2(n+2)/n$ in \eqref{cond q r embed}, then $\beta=\beta(n)$.
See \cite[pp.~74-75]{LSU}.

\subsection{Weak solutions}
Let $\mathscr D=(a,b)\times \Omega$, where $-\infty < a <b < +\infty$.
Let  $f \in L_{\tilde p', \tilde q'}(\mathscr D)$, where $\tilde p'$ and $\tilde q'$ are H\"older conjugates of $\tilde p$ and $\tilde q$, respectively, and $\tilde p$ and $\tilde q$ satisfy $\frac{n}{\tilde p}+\frac{2}{\tilde q}=\frac{n}{2}$ with ranges specified in \eqref{cond q r embed}.
We say that $u \in V_2(\mathscr D)$ is a weak solution of $Pu=f$ in $\mathscr D$ if for almost all $t_{1}\in (a, b)$ the identity
\begin{multline*}
I(t_1; u, \phi):=\int_\Omega u(t_1, x)\phi(t_1,x)\,dx -\int_a^{t_1}\!\!\!\int_{\Omega}u\phi_t\,dxdt+\int_a^{t_1}\!\!\!\int_\Omega (a^{ij}D_j  u+b^i u)D_i \phi \,dxdt\\
 + \int_a^{t_1}\!\!\!\int_\Omega (c^i D_i u\phi+du\phi)\,dxdt -\int_a^{t_1}\!\!\!\int_{\Omega}f\phi\,dxdt=0
\end{multline*}
holds for all $\phi\in C^{1,1}_c(\overline{\mathscr D}\setminus \partial_p \mathscr D)$.

For a given function $\psi_0 \in L_2(\Omega)$, we say that $u\in\mathring{V}_2^{1,0}(\mathscr D)$ is a weak solution of the problem
\begin{equation}\label{weak sol}
Pu=f\;\text{ in }\;\mathscr D,\quad  u(a,\cdot)=\psi_0\;\text{ on }\;\Omega,
\end{equation}
if for all  $t_1\in [a,b]$ the identity
\begin{equation}			\label{eq1500f}
I(t_1; u, \phi)=\int_\Omega \psi_0(x)\phi(a,x)\,dx
\end{equation}
holds for all $\phi \in C^{1,1}_c(\overline{\mathscr D}\setminus \partial_x \mathscr D)$.

For the adjoint operator $P^*$ given by \eqref{adj}, we similarly define weak solutions of $P^*u=f$ in $\mathscr D$ and weak solutions of the corresponding backward problem
\begin{equation}		\label{backward_prob}
P^*u=f\;\text{ in }\;\mathscr D,\quad  u(b,\cdot)=\psi_0\;\text{ on }\;\Omega.
\end{equation}

\subsection{Energy inequality}
Under the condition (H), we can derive the following ``global'' energy inequality.
\begin{lemma}			\label{energy_ineq}
Suppose the coefficients of the operator $P$ satisfy the condition (H).
Let $\mathscr D=(a, b)\times\Omega$, where $-\infty<a<b <+\infty$.
Let $f \in L_{(2n+4)/(n+4)}(\mathscr D)$ and $\psi_0 \in L_2(\Omega)$ be given.
If $u\in\mathring{V}_2^{1,0}(\mathscr D)$ is a weak solution of the problem \eqref{weak sol},
then we have
\begin{equation}		\label{energy estimate}
\oldnorm{u}_{\mathscr D}\leq C\left(\norm{\psi_0}_{L_2(\Omega)}+\norm{f}_{L_{(2n+4)/(n+4)}(\mathscr D)}\right),
\end{equation}
where $C$ is a constant depending only on $n$ and $\nu$.
The same estimate is true if $u\in\mathring{V}_2^{1,0}(\mathscr D)$ is a weak solution of the the corresponding backward problem \eqref{backward_prob}.
\end{lemma}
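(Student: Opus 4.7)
The strategy is to test the weak formulation \eqref{eq1500f} against $\phi=u$ itself and exploit (H3) to make every lower-order contribution nonnegative, so that the $\Theta$-dependence disappears and only $n$ and $\nu$ are left to control the constant. Since $u\in\mathring V_2^{1,0}(\mathscr D)$ is only continuous into $L_2(\Omega)$ and carries no $L_2$ time-derivative, this choice is not literally admissible; the standard remedy is the Steklov average
\[
u_h(t,\cdot):=\frac{1}{h}\int_t^{t+h}u(s,\cdot)\,ds,
\]
multiplied by a smooth cutoff localizing to $(a,t_1)$, substituted into \eqref{eq1500f}, and then sent $h\to 0$. Equivalently, since $W_2^{1,1}(\mathscr D)$ is dense in $V_2^{1,0}(\mathscr D)$ in the $V_2$-norm (as recalled in Section~\ref{preliminary}), one can argue by approximation. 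The outcome is the integrated identity
\begin{align*}
\tfrac12\norm{u(t_1,\cdot)}_{L_2(\Omega)}^2-\tfrac12\norm{\psi_0}_{L_2(\Omega)}^2 &+\int_a^{t_1}\!\!\int_\Omega a^{ij}D_j u\,D_i u\,dx\,dt \\
&+\int_a^{t_1}\!\!\int_\Omega(\vec b+\vec c)\,u\cdot\nabla u\,dx\,dt+\int_a^{t_1}\!\!\int_\Omega du^2\,dx\,dt=\int_a^{t_1}\!\!\int_\Omega fu\,dx\,dt
\end{align*}
for a.e.\ $t_1\in(a,b)$.

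\textbf{Using (H3).} The decisive algebraic step is the splitting $\vec b+\vec c=2\vec b+(\vec c-\vec b)$, which rewrites the cross term as
\[
(\vec b+\vec c)\,u\cdot\nabla u=\vec b\cdot\nabla(u^2)+\tfrac12(\vec c-\vec b)\cdot\nabla(u^2).
\]
Interpreting the $x$-integrations by parts distributionally (legitimate because $\Div\vec b$ and $\Div(\vec b-\vec c)$ are nonnegative distributions by (H3) and $u^2$ has the requisite regularity after approximation), I obtain
\[
\int_a^{t_1}\!\!\int_\Omega\bigl[(\vec b+\vec c)\,u\cdot\nabla u+du^2\bigr]\,dx\,dt=\int_a^{t_1}\!\bigl\langle d-\Div\vec b,\,u^2\bigr\rangle dt+\tfrac12\int_a^{t_1}\!\bigl\langle\Div(\vec b-\vec c),\,u^2\bigr\rangle dt,
\]
and both pieces on the right are nonnegative by (H3). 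Combined with ellipticity (H1), the identity collapses to
\[
\tfrac12\norm{u(t_1,\cdot)}_{L_2(\Omega)}^2+\nu\norm{\nabla u}_{L_2((a,t_1)\times\Omega)}^2\le\tfrac12\norm{\psi_0}_{L_2(\Omega)}^2+\Bigabs{\int_a^{t_1}\!\!\int_\Omega fu\,dx\,dt},
\]
an inequality in which \emph{no} norm of $\vec b$, $\vec c$, or $d$ appears.

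\textbf{Closing the estimate.} To control the source term I apply H\"older with exponents $\tilde p=\tilde q=2(n+2)/n$ and their conjugates $(2n+4)/(n+4)$. These exponents lie in the admissible range \eqref{cond q r embed} for every $n\ge 1$, so \eqref{embd_ineq} with constant $\beta=\beta(n)$ gives
\[
\Bigabs{\int_a^{t_1}\!\!\int_\Omega fu\,dx\,dt}\le\norm{f}_{L_{(2n+4)/(n+4)}(\mathscr D)}\norm{u}_{L_{2(n+2)/n}(\mathscr D)}\le\beta(n)\norm{f}_{L_{(2n+4)/(n+4)}(\mathscr D)}\,\oldnorm{u}_{\mathscr D}.
\]
Because the right side is independent of $t_1$, taking an essential supremum of the previous display over $t_1\in(a,b)$ (and the particular choice $t_1=b$ for the gradient term) promotes the left side to a quantity comparable to $\oldnorm{u}_{\mathscr D}^2$; Young's inequality then absorbs the factor $\oldnorm{u}_{\mathscr D}$ on the right with a coefficient depending only on $\nu$, yielding \eqref{energy estimate} with $C=C(n,\nu)$. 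The backward problem \eqref{backward_prob} is handled in exactly the same way after reversing time, and, crucially, the same splitting $\vec b+\vec c=2\vec b+(\vec c-\vec b)$ still works: what the argument really needs is only the nonnegativity of $d-\Div\vec b$ and $\Div(\vec b-\vec c)$ in (H3), not a separate (H3) for the adjoint. The one genuinely delicate step is the Steklov-averaging justification at the beginning; the rest is a one-line algebraic identity followed by a single absorption inequality.
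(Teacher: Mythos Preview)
Your proof is correct and follows essentially the same route as the paper: test with $u$ via Steklov averages, split the lower-order terms as $(\vec b+\vec c)\,u\cdot\nabla u+du^2=(2\vec b\cdot\nabla u\,u+du^2)-(\vec b-\vec c)\cdot\nabla u\,u$ so that (H3) renders both pieces nonnegative, then close with H\"older, the embedding \eqref{embd_ineq} at $\tilde p=\tilde q=2(n+2)/n$, and Young's inequality. Your observation that the adjoint backward problem uses the \emph{same} sign conditions (not an adjoint version of (H3)) is exactly what the paper relies on as well.
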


\begin{proof}
By taking $\phi=u_h$ in \eqref{eq1500f}, where $u_h$ is the Steklov average of $u$ (see \cite[\S III.2]{LSU}), integrating by parts, and taking $h\to 0$, we have for all  $t_1 \in [a,b]$ that
\begin{multline}				\label{eq1529w}
\frac12 \int_\Omega u^2(t_1,x)\,dx+\int_a^{t_1}\!\!\!\int_\Omega (a^{ij}D_ju+b^i u)D_i u \,dxdt + \int_a^{t_1}\!\!\!\int_\Omega (c^iD_i u u+du^2) dx dt \\
=\frac12 \int_\Omega \psi_0(x) u(a,x)\,dx+ \int_a^{t_1}\!\!\!\int_\Omega f u\,dxdt.
\end{multline}
Since the condition (H3) implies that
\[
\int_a^{t_1}\!\!\!\int_\Omega du^2+2 b^i u D_i u \ge 0\quad \text{and}
\quad
-\int_a^{t_1}\!\!\!\int_\Omega (b^i-c^i) uD_iu \ge 0,
\]
it follows from \eqref{eq1529w} and the condition (H1) that
\begin{equation}			\label{ineq energy1}
\frac12 \int_\Omega u^2(t_1,x)\,dx+ \nu \int_a^{t_1}\!\!\!\int_\Omega \abs{Du}^2\,dx dt \le \int_a^{t_1}\!\!\!\int_\Omega fu\,dxdt+ \frac12 \int_\Omega \psi_0(x) u(a,x)\,dx.
\end{equation}
By H\"older's inequality and the embedding \eqref{embd_ineq}, we have
\begin{equation}			\label{eq1432f}
\int_a^{t_1}\!\!\! \int_\Omega f u\,dxdt \le \norm{f}_{L_{(2n+4)/(n+4)}(\mathscr D)}\norm{u}_{L_{(2n+4)/n}(\mathscr D)} \le \beta \norm{f}_{L_{(2n+4)/(n+4)}(\mathscr D)}\oldnorm{u}_{\mathscr D},
\end{equation}
where $\beta=\beta(n)$. 
Now, the estimate \eqref{energy estimate} follows from the standard argument involving Young's inequality.

For the corresponding backward problem \eqref{backward_prob}, similar to \eqref{ineq energy1}, we have 
\[
\frac12 \int_\Omega u^2(t_1,x)\,dx+ \nu \int_{t_1}^b\!\!\!\int_\Omega \abs{Du}^2\,dx dt \le \int_{t_1}^b\!\!\!\int_\Omega fu\,dxdt+ \frac12 \int_\Omega \psi_0(x) u(b,x)\,dx
\]
for all $t_1 \in [a,b]$.
Hence, the energy inequality \eqref{energy estimate} is also valid for the backward problem \eqref{backward_prob}.
\end{proof}

\subsection{Existence and uniqueness of weak solutions}
With the energy inequality \eqref{energy estimate} available, we can construct a weak solution of the problem \eqref{weak sol} by Galerkin's method.
Uniqueness is also a consequence of the energy inequality.
See \cite[\S III.4]{LSU} for the details.
We state these observations in the following lemma for the reference.

\begin{lemma}\label{lem_uniq}
Suppose the coefficients of the operator $P$ satisfy the condition (H).
Let $\mathscr D=(a, b)\times\Omega$, where $-\infty<a<b <+\infty$.
Let $f \in L_{(2n+4)/(n+4)}(\mathscr D)$ and $\psi_0 \in L_2(\Omega)$ be given.
Then there exists a unique weak solution $u\in\mathring{V}_2^{1,0}(\mathscr D)$ of the problem \eqref{weak sol}.
The same is true for the backward problem \eqref{backward_prob}.
\end{lemma}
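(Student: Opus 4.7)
Uniqueness is an immediate consequence of Lemma~\ref{energy_ineq}. If $u_1, u_2 \in \mathring{V}_2^{1,0}(\mathscr D)$ both solve \eqref{weak sol} with the same data, then $u := u_1 - u_2$ is a weak solution with $f=0$ and $\psi_0=0$, so \eqref{energy estimate} gives $\oldnorm{u}_{\mathscr D}=0$ and hence $u_1 \equiv u_2$. The backward problem \eqref{backward_prob} is treated identically via the backward version of the energy inequality.

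\textbf{Existence by Galerkin's method.} I would first treat the case where $\Omega$ is bounded with smooth boundary, and fix a basis $\{\varphi_k\}$ of $\mathring{W}_2^1(\Omega)$ that is orthonormal in $L_2(\Omega)$, for instance the Dirichlet eigenfunctions of $-\Delta$. Seek approximate solutions $u^N(t,x) = \sum_{k=1}^N \alpha_k^N(t)\,\varphi_k(x)$ by imposing, for each $k \le N$,
\[
\int_\Omega \partial_t u^N \varphi_k \, dx + \int_\Omega \bigl[(a^{ij}D_j u^N + b^i u^N)D_i\varphi_k + (c^i D_i u^N + d\, u^N)\varphi_k\bigr] dx = \int_\Omega f\varphi_k\, dx,
\]
with $\alpha_k^N(a) = \int_\Omega \psi_0\, \varphi_k\, dx$. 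This is a linear first-order ODE system for $\alpha^N = (\alpha_1^N,\ldots,\alpha_N^N)$ whose matrix entries lie in $L_1(a,b)$ thanks to the integrability of $\vec b, \vec c, d$, so Carath\'eodory theory supplies a unique absolutely continuous $\alpha^N$. Testing with $u^N$ itself and repeating the derivation behind Lemma~\ref{energy_ineq} yields the uniform bound $\oldnorm{u^N}_{\mathscr D} \le C(\norm{\psi_0}_{L_2(\Omega)} + \norm{f}_{L_{(2n+4)/(n+4)}(\mathscr D)})$. Extract a subsequence converging weakly in $W_2^{1,0}(\mathscr D)$ and weak-$*$ in $L_{2,\infty}(\mathscr D)$, and pass to the limit in \eqref{eq1500f} first against finite linear combinations of $\{\varphi_k\}$, then by density against arbitrary admissible $\phi$, using \eqref{embd_ineq} and H\"older to verify the continuity of each lower-order term as a functional on $V_2(\mathscr D)$. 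A standard difference-quotient argument, applied to the equation, gives continuity in $t$ with values in $L_2(\Omega)$ and thus membership in $\mathring{V}_2^{1,0}(\mathscr D)$. For a general, possibly unbounded or irregular $\Omega$, exhaust $\Omega$ by bounded smooth subdomains $\Omega_k \Subset \Omega_{k+1}$ with $\bigcup_k \Omega_k = \Omega$, solve on each $(a,b)\times\Omega_k$ with data $(\psi_0 \mathbf 1_{\Omega_k},\, f \mathbf 1_{(a,b)\times\Omega_k})$, extend the solutions $u_k$ by zero to $\mathscr D$, and extract a weak limit from the $k$-uniform energy bound. Any $\phi \in C^{1,1}_c(\overline{\mathscr D}\setminus\partial_x\mathscr D)$ is supported in $(a,b)\times\Omega_k$ for all sufficiently large $k$, so \eqref{eq1500f} passes to the limit.

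\textbf{Main obstacle.} The principal subtlety is the critical scaling of the lower-order coefficients. In the subcritical regime $n/p + 2/q < 1$ one could absorb the contributions of $\vec b, \vec c, d$ into the diffusion term via Young's inequality with a small parameter, but at the critical scaling $n/p+2/q=1$ no such slack is available. Condition (H3), which already produces \eqref{energy estimate}, is precisely what removes this difficulty: it renders the lower-order contribution nonnegative when tested against $u$ itself, so the Galerkin approximations inherit the global-in-time energy bound without any smallness hypothesis on $\vec b, \vec c, d$. The only remaining issue, continuity of the lower-order bilinear forms under the weak limit, is routine because the test function $\phi$ is fixed and compactly supported, so $\vec b D\phi$ and $\vec c\, \phi$ provide fixed $L_{p,q}$ factors that pair by H\"older with the weak-$L_2$ limits of $u^N$ and $Du^N$ through the embedding \eqref{embd_ineq}.
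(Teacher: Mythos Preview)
Your proposal is correct and follows essentially the same approach as the paper, which simply asserts that Galerkin's method together with the energy inequality \eqref{energy estimate} yields existence and uniqueness, referring to \cite[\S III.4]{LSU} for the details. You have sketched precisely those details (with the harmless extra step of exhausting a general $\Omega$ by bounded smooth subdomains), and your identification of (H3) as the mechanism that makes the critical-scaling lower-order terms harmless in the a priori bound is exactly the point.
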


We note that the condition that $f \in L_{(2n+4)/(n+4)}(\mathscr D)$ in Lemmas~\ref{energy_ineq} and \ref{lem_uniq} can be replaced by $f \in L_{\tilde p', \tilde q'}(\mathscr D)$, where $\tilde p'$ and $\tilde q'$ are H\"older conjugates of $\tilde p$ and $\tilde q$, respectively, and $\tilde p$ and $\tilde q$ satisfy $\frac{n}{\tilde p}+\frac{2}{\tilde q}=\frac{n}{2}$ with ranges specified in \eqref{cond q r embed}.
This is because the inequality \eqref{eq1432f} remains valid with $L_{\tilde p',\tilde q'}$ norm of $f$.
However, we do not use this fact in the paper.

If $\mathscr D=(a,\infty)\times \Omega$ and $f \in L_{(2n+4)/(n+4)}(\mathscr D)$, then by letting $b\to \infty$ in Lemmas~\ref{energy_ineq} and \ref{lem_uniq}, we can say that $u$ is the weak solutions in $\mathring{V}^{1,0}_2(\mathscr D)$ of the problem \eqref{weak sol}.

\subsection{local boundedness property}
The following lemma says that we have ``scale-invariant'' local boundedness property for weak solutions of $Pu=f$ in $Q^{-}_r(X_0)$ that vanish on $S_r^{-}(X_0)$, where
\begin{equation}			\label{eq1717f}
\begin{aligned}
Q_r^{-}(X_0)&=(t_0-r^2,t_0)\times (B_r(x_0) \cap \Omega),\\
S_r^{-}(X_0)&=(t_0-r^2,t_0)\times (B_r(x_0) \cap \partial\Omega),
\end{aligned}
\end{equation}
and $-\infty <t_0<+\infty$ and $x_0 \in \Omega$.
When dealing with the adjoint operator $P^{*}$, we replace $Q^{-}_r(X_0)$ and $S^{-}_r(X_0)$ with $Q^{+}_r(X_0)$ and $S^{+}_r(X_0)$, where
\begin{equation}			\label{eq1718f}
\begin{aligned}
Q_r^{+}(X_0)&=(t_0, t_0+r^2)\times (B_r(x_0) \cap \Omega),\\
S_r^{+}(X_0)&=(t_0, t_0+r^2)\times (B_r(x_0) \cap \partial\Omega).
\end{aligned}
\end{equation}

\begin{lemma}\label{loc_bdd}
Suppose the coefficients of the operator $P$ satisfy the condition (H).
Let $Q_r^{-}=Q_r^{-}(X_0)$ and $S_r^{-}=S_r^{-}(X_0)$.
If $u\in V_2(Q_r^{-})$ is a weak solution of $Pu=f$ in $Q_r^{-}$ vanishing on $S_r^{-}$, where $f\in L_{\infty}(Q_r^{-})$, then we have
\begin{equation}			\label{loc_bdd_est}
\norm{u}_{L_{\infty}(Q^{-}_{r/2})}\leq N_0\left( r^{-\frac{n+2}{2}} \norm{u}_{L_2(Q^{-}_r)}+r^2\norm{f}_{L_{\infty}(Q^{-}_r)}\right),
\end{equation}
where $N_0$ is a constant that depends only on $n$, $\nu$, $p$, and $\Theta$.
The corresponding statement is valid for the weak solution of $P^{*}u=f$ in $Q^{+}_r(X_0)$ vanishing on $S^{+}_r(X_0)$.
\end{lemma}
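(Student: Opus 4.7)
The plan is a Moser-type iteration, with the critical lower-order terms handled by carefully exploiting (H3). Since the equation is scalar, I would separately treat $u_+ = \max(u,0)$ and $u_-$ as nonnegative sub/supersolutions; a standard reduction replacing $u_+$ by $u_+ + Kr^2\norm{f}_{L_\infty(Q_r^-)}$, with $K$ chosen so that the nonnegative distribution $d - \Div\,\vec b$ from (H3) absorbs $\abs{f}$, reduces matters to a nonnegative $u$ satisfying $Pu \le 0$. Then, with a smooth spacetime cutoff $\eta$ vanishing on $S_r^- \cup (\set{t = t_0 - r^2}\times\Omega)$, for each $\beta \ge 1$ I would test the weak formulation against $v = u^{2\beta-1}\eta^2$. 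The time derivative and the leading term produce the usual $\sup_t \int u^{2\beta}\eta^2$ and $\int u^{2\beta-2}\abs{\nabla u}^2\eta^2$ on the left side.

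The crucial step is to use (H3) twice. First, $d - \Div\,\vec b \ge 0$ applied distributionally to the nonnegative test function $uv = u^{2\beta}\eta^2$ gives $\int \vec b u \cdot \nabla v + \int \vec b \cdot \nabla u\, v + \int d u v \ge 0$, and hence
\begin{equation*}
\int \vec b u \cdot \nabla v + \int \vec c \cdot \nabla u\, v + \int d u v \ge -\int (\vec b - \vec c) \cdot \nabla u\, v,
\end{equation*}
reducing all lower-order contributions to a single $\vec b - \vec c$ term. Second, $\Div(\vec b - \vec c) \ge 0$ applied to the same $u^{2\beta}\eta^2$, followed by an integration by parts, converts this remaining term into one without $\nabla u$:
\begin{equation*}
\int (\vec b - \vec c) \cdot \nabla u\, v \le \frac{1}{\beta}\int \abs{\vec b - \vec c}\, u^{2\beta}\eta\abs{\nabla\eta}.
\end{equation*}
Setting $W = u^\beta \eta$ and decomposing $\abs{\vec b - \vec c} u^{2\beta} \eta \abs{\nabla \eta} = \abs{\vec b - \vec c} \cdot W \cdot u^\beta \abs{\nabla \eta}$, I would apply a three-factor mixed-norm H\"older inequality with exponents $(p,q)$, $(\tilde p, \tilde q)$, $(2,2)$, where $\tilde p = 2p/(p-2)$ and $\tilde q = 2q/(q-2)$; a direct computation using $n/p + 2/q = 1$ confirms both H\"older conjugacy and that $(\tilde p, \tilde q)$ satisfies $n/\tilde p + 2/\tilde q = n/2$, so that \eqref{embd_ineq} applies to $W$. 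Combined with $\norm{\vec b - \vec c}_{L_{p,q}} \le \Theta$, Young's inequality with a parameter chosen in terms of $\nu$ and $\Theta$ absorbs a fraction of $\oldnorm{W}^2$ into the left side, yielding the Caccioppoli
\begin{equation*}
\oldnorm{u^\beta \eta}_{Q_r^-}^2 \le C(n,\nu,p,\Theta) \int u^{2\beta}(\abs{\nabla\eta}^2 + \eta\abs{\partial_t\eta}) + C\beta \int \abs{f} u^{2\beta-1}\eta^2.
\end{equation*}

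To conclude, I would apply \eqref{embd_ineq} with $\tilde p = \tilde q = 2(n+2)/n$ to upgrade $\oldnorm{W}$ to the $L_{2(n+2)/n}$ norm of $u^\beta \eta$, furnishing the reverse-H\"older gain that drives the iteration. Iterating across nested cylinders $Q_{r_j}^-$ with $r_j \downarrow r/2$ and exponents $\beta_j = ((n+2)/n)^j \to \infty$, and telescoping the resulting geometric product (convergent since $\sum \beta_j^{-1} < \infty$), gives the claimed bound \eqref{loc_bdd_est}. The main obstacle is the absorption step: because $\vec b - \vec c$ lies in a critical space and $\Theta$ is not assumed small, the coefficient in front of $\oldnorm{W}^2$ on the right must be uniformly controlled in $\beta$. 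This is exactly what the integration-by-parts device from $\Div(\vec b - \vec c) \ge 0$ provides---removing $\nabla u$ leaves the asymmetric H\"older product $\oldnorm{W} \cdot \norm{u^\beta \abs{\nabla\eta}}_{L_2}$ rather than a second copy of $\oldnorm{W}$---so the absorption constant, and hence $N_0$, depends only on $n$, $\nu$, $p$, and $\Theta$.
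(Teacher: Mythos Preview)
Your Moser-iteration plan is a legitimate alternative to the paper's route, which is a De~Giorgi level-set iteration (working with $v=(u-k)_+$, nested levels $k_m$, and fast geometric convergence). The structural core is the same: both arguments use $d-\Div\vec b\ge 0$ on the nonnegative test function to discard the $\vec b,\vec c,d$ block except for a single $(\vec b-\vec c)\cdot\nabla u$ term, and then use $\Div(\vec b-\vec c)\ge 0$ to integrate by parts and move the gradient onto the cutoff. Your three-factor H\"older with exponents $(p,q)$, $(\tilde p,\tilde q)=(\tfrac{2p}{p-2},\tfrac{2q}{q-2})$, $(2,2)$ followed by the embedding \eqref{embd_ineq} is exactly the paper's estimate for this term in the $p>n$ case. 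Your observation that the $1/\beta$ gained from the integration by parts renders the absorption constant independent of $\beta$ is the right reason the iteration closes with $N_0=N_0(n,\nu,p,\Theta)$.

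Two points need correction. First, the claimed reduction to $Pu\le 0$ is not justified as stated: the distribution $d-\Div\vec b$ may be identically zero, so no choice of $K$ makes it absorb $\abs f$. The standard fix is purely analytic---work with $\bar u=u_++r^2\norm f_{L_\infty}$, so that $\abs f\,\bar u^{2\beta-1}\le r^{-2}\bar u^{2\beta}$ becomes a harmless zero-order term in the Caccioppoli---and has nothing to do with (H3). Since you already carry the $f$-term in your displayed Caccioppoli, this is a misstatement rather than a gap.

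Second, your H\"older splitting needs $\tilde p=\tfrac{2p}{p-2}$ to lie in the admissible range of \eqref{cond q r embed}, and this fails at the endpoint $(n,p,q)=(2,2,\infty)$, where $\tilde p=\infty$ but \eqref{cond q r embed} requires $\tilde p<\infty$ when $n=2$. The paper handles the case $p=n$ by a genuinely different mechanism: writing $(\vec b-\vec c)1_{\mathscr D}=D_j\Phi^{ij}$ with $\sup_t\norm{\Phi^{ij}(t,\cdot)}_{\mathrm{BMO}}\le C_n\Theta$, subtracting the spatial average of $\Phi^{ij}$ over $B_r$, and applying the John--Nirenberg inequality together with an auxiliary exponent $s\in(2,\tfrac{2(n+2)}{n})$ to recover a Caccioppoli estimate with $L_s$ norms on the right. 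Your outline does not address this endpoint, so as written it does not cover the full range of (H2).
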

We emphasize that the constant $N_0$ in the lemma is independent of $r$.
The proof will be given in section~\ref{sec7}.

\section{Main results}			\label{sec_main}
\begin{theorem}			\label{thm_main}
Suppose the the coefficients of operator $P$ satisfy the condition (H) and let $\mathscr D=(-\infty,\infty) \times\Omega$.
Then, there exists a unique Green's function $G(X,Y)=G(t,x,s,y)$ on $\mathscr D \times \mathscr D$ which satisfies $G(t,x,s,y)\equiv0$ for $t<s$, and has the following property:
For any $\psi_0 \in L_2(\Omega)$, the function $u$ given by
\begin{equation}\label{idetity 00}
u(t,x):=\int_{\Omega}G(t,x,s,y)\psi_0(y)\ dy \quad (t>s,\;\; x\in\Omega)
\end{equation}
is the unique weak solution in $\mathring{V}_2^{1,0}((s,\infty)\times\Omega)$ of the problem
\[
Pu=0\;\text{ in }\;(s,\infty)\times \Omega,\quad  u(s,\cdot)=\psi_0\;\text{ on }\;\Omega.
\]
Moreover, the Green's function satisfies the following Gaussian bound:
For all $t>s$ and $x$, $y\in \Omega$, we have
\begin{equation}\label{Gaussian bound}
\abs{G(t,x,s,y)}\leq\frac{C}{(t-s)^{\frac{n}{2}}}\exp\left\{-\frac{\kappa\abs{x-y}^2}{t-s}\right\},
\end{equation}
where $C=C(n, \nu, p, \Theta)$ and $\kappa=\kappa(n,\nu,\Theta)$ are positive constants.
\end{theorem}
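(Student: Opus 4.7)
The plan is to construct $G$ as the limit of approximations with mollified initial data, establish an on-diagonal $L^1\to L^\infty$ bound using the local boundedness lemma together with duality, and then upgrade this to the Gaussian bound \eqref{Gaussian bound} by Davies' exponential perturbation method. Throughout, the crucial feature of (H) is that both $P$ and its adjoint $P^*$ (defined in \eqref{adj}) satisfy (H) simultaneously, so every estimate proved for $P$ applies equally to $P^*$.

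\textbf{Construction and on-diagonal bound.} Fix $(s,y)\in\mathscr D$ and let $\phi_\varepsilon^y\in C_c^\infty(\Omega)$ be a standard mollifier of $\delta_y$. By Lemma~\ref{lem_uniq} applied on $(s,s+T)\times\Omega$ and letting $T\to\infty$, there is a unique weak solution $u_\varepsilon\in\mathring V_2^{1,0}((s,\infty)\times\Omega)$ of $Pu_\varepsilon=0$ with $u_\varepsilon(s,\cdot)=\phi_\varepsilon^y$; set $G_\varepsilon(t,x,s,y):=u_\varepsilon(t,x)$. Applying Lemma~\ref{loc_bdd} at $X_0=(t,x)$ on a cylinder of radius $r\sim\sqrt{t-s}$ and bounding $\norm{u_\varepsilon}_{L_2}$ on that cylinder via Lemma~\ref{energy_ineq} yields an ultracontractive $L^2\to L^\infty$ bound
\[
\norm{u_\varepsilon(t,\cdot)}_{L_\infty(\Omega)}\le C(t-s)^{-n/4}\norm{\phi_\varepsilon^y}_{L_2(\Omega)}.
\]
The analogous estimate for $P^*$, combined with the identity that $\tau\mapsto\int_\Omega u_\varepsilon(\tau,\cdot)v^*(\tau,\cdot)\,dx$ is constant whenever $Pu_\varepsilon=P^*v^*=0$, upgrades this to the on-diagonal bound $\abs{G_\varepsilon(t,x,s,y)}\le C(t-s)^{-n/2}$, uniform in $\varepsilon$.

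\textbf{Gaussian decay.} For $\alpha\in\bR^n$, the conjugated function $w_\varepsilon(t,x):=e^{\alpha\cdot x}u_\varepsilon(t,x)$ satisfies a divergence-form equation $P^\alpha w_\varepsilon=0$ whose coefficients differ from those of $P$ only by bounded terms of size $O(\abs{\alpha})+O(\abs{\alpha}^2)$ arising from $\mathbf{A}\alpha$ and $\alpha\cdot\mathbf{A}\alpha$. Rerunning the previous argument for $P^\alpha$ while tracking these perturbations produces a Gronwall-type factor $e^{c\abs{\alpha}^2(t-s)}$ from the energy inequality, while the extra bounded drift and potential can be absorbed in the Moser iteration underlying Lemma~\ref{loc_bdd} with constants depending on $\abs{\alpha}$. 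The outcome is
\[
e^{\alpha\cdot(x-y)}\abs{G_\varepsilon(t,x,s,y)}\le C(t-s)^{-n/2}e^{c\abs{\alpha}^2(t-s)}.
\]
Optimizing in $\alpha$, chosen along $x-y$ with modulus $\sim\abs{x-y}/(t-s)$ (truncated to stay bounded), yields the Gaussian bound \eqref{Gaussian bound} for $G_\varepsilon$.

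\textbf{Limit, identification, and main obstacle.} The uniform bounds above, together with the energy inequality, provide enough compactness to extract a diagonal subsequence $\varepsilon_k\to 0$ such that $G_{\varepsilon_k}\to G$ weakly in $V_2$ on each compact subcylinder away from $(s,y)$ and a.e.; the Gaussian bound is inherited. The representation \eqref{idetity 00} is first verified for $\psi_0\in C_c^\infty(\Omega)$ by pairing against test functions and invoking uniqueness from Lemma~\ref{lem_uniq}, then extended to $\psi_0\in L_2(\Omega)$ by density combined with the energy estimate; uniqueness of $G$ is then automatic. The main technical difficulty is in the Gaussian decay step: the perturbed operator $P^\alpha$ does not satisfy (H) with the original $\Theta$ (the added drift $\mathbf{A}\alpha$ is bounded but not small, and the structural conditions (H3) can break), so one must reprove the energy and local boundedness estimates with explicit, controlled dependence on $\abs{\alpha}$, verifying that this dependence is precisely $e^{c\abs{\alpha}^2(t-s)}$ so that $\alpha$-optimization converts it into a Gaussian rather than a weaker exponential decay.
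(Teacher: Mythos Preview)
Your overall architecture---Davies' exponential perturbation to get weighted $L_2\to L_2$, then local boundedness plus duality to reach $L_1\to L_\infty$, then optimization---matches the paper's. The construction also differs only cosmetically: the paper approximates $G$ by solving $Pv_\varepsilon = |Q_\varepsilon^-(Y)|^{-1}1_{Q_\varepsilon^-(Y)}$ rather than taking mollified initial data, which yields the pointwise bound $|G^\varepsilon(X,Y)|\le C|X-Y|^{-n}$ directly and feeds into the compactness argument; your route is equally standard.

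The substantive divergence is in the Gaussian step, and the ``main technical difficulty'' you flag is self-imposed. You propose to regard $w_\varepsilon=e^{\alpha\cdot x}u_\varepsilon$ as a solution of a perturbed operator $P^\alpha$ and then \emph{reprove} Lemma~\ref{loc_bdd} for $P^\alpha$, tracking the $|\alpha|$-dependence through the De~Giorgi iteration. This is delicate precisely because, as you note, (H3) fails for $P^\alpha$, and (H3) is what makes the iteration scale-invariant. The paper sidesteps this entirely: it applies Lemma~\ref{loc_bdd} to $u$ itself, which solves $Pu=0$ with the \emph{original} coefficients satisfying (H), and handles the weight separately by the trivial observation that $e^{\psi}$ varies by at most a factor $e^{\gamma_1 r}$ across a cylinder of spatial radius $r$. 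Concretely, with $r\sim\sqrt{t-s}$,
\[
|e^{\psi(x)}u(t,x)|\le N_0\, r^{-(n+2)/2}\,e^{\gamma_1 r}\,\|e^{\psi}u\|_{L_2(Q_r^-)},
\]
so the only extra factor is $e^{\gamma_1\sqrt{t-s}}$, harmless after optimization. The weighted $L_2\to L_2$ bound is obtained not by an energy inequality for $P^\alpha$ but by differentiating $I(t)=\int_\Omega e^{2\psi}u^2$ directly and using (H3) for the original $P$ to discard the sign-definite terms; this yields $I'(t)\le (\lambda\gamma_1^2+\mu\gamma_2)I(t)$ (or, when $p>n$, an iterated version over time slabs). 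Thus no perturbed operator ever appears, and Lemma~\ref{loc_bdd} is invoked only once, for $P$ and $P^*$ as stated. Your approach can be made to work, but it reproves a harder lemma than necessary; the paper's decoupling of the weight from the local boundedness step is the cleaner path.
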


\begin{corollary}
Let $\Omega$ be an open connected set in $\bR^n$ with $n\ge 3$.
Suppose the coefficients of elliptic operator $L$ in \eqref{eq1136sun} satisfy the condition (H1) and (H3). 
In place of (H2), assume that $\vec b$, $\vec c \in L_n(\Omega)$, $d \in L_{n/2}(\Omega)$, and that $(\vec b-\vec c)1_\Omega \in \mathrm{BMO}^{-1}$, that is, there are functions $\Phi^{ij}$ in $\bR^n$ and a positive constant $\Theta$ such that
\[
(b^i-c^i)1_\Omega= D_j \Phi^{ij},\quad \sum_{i,j=1}^n\,\norm{\Phi_{ij}}_{\mathrm{BMO}(\bR^n)}^2 \le \Theta^2.
\]
Then there exists the Green's function $G(x,y)$ on $\Omega \times \Omega$ and it has the bound 
\[
\abs{G(x,y)} \le C\abs{x-y}^{2-n},
\]
where $C=C(n, \nu, \Theta)$.
\end{corollary}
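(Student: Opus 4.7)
The plan is to realize the elliptic Green's function of $L$ as the time-integral of the heat kernel of the parabolic operator $P := \partial_t + L$, and to read off the elliptic bound directly from the Gaussian bound of Theorem~\ref{thm_main}. First I would view the time-independent coefficients $\mathbf{A}$, $\vec b$, $\vec c$, $d$ as functions on $\mathscr D := (-\infty,\infty) \times \Omega$. Then (H1) and (H3) transfer verbatim, and (H2) holds with the endpoint exponents $p = n$, $q = \infty$: the assumptions $\vec b, \vec c \in L_n(\Omega)$ and $d \in L_{n/2}(\Omega)$ place $\vec b, \vec c \in L_{n,\infty}(\mathscr D)$ and $d \in L_{n/2,\infty}(\mathscr D)$, while the decomposition $(b^i - c^i)1_\Omega = D_j \Phi^{ij}$ with $\sum \norm{\Phi^{ij}}_{\mathrm{BMO}(\mathbb{R}^n)}^2 \le \Theta^2$ promotes to the $L_\infty(\mathrm{BMO}^{-1})$ condition on $(\vec b - \vec c)1_{\mathscr D}$ with the same constant $\Theta$. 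Theorem~\ref{thm_main} then supplies a heat kernel $G_P(t,x,s,y)$ satisfying the Gaussian bound \eqref{Gaussian bound} with $C$ and $\kappa$ depending only on $n, \nu, \Theta$.

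Next, for $x \ne y$ in $\Omega$, I would define
$$G(x,y) := \int_0^\infty G_P(t, x, 0, y)\,dt.$$
The substitution $\sigma = \kappa \abs{x-y}^2/t$ converts the Gaussian bound into
$$\abs{G(x,y)} \le C \int_0^\infty t^{-n/2} e^{-\kappa \abs{x-y}^2/t}\,dt = C'\abs{x-y}^{2-n},$$
where $C' = C'(n,\nu,\Theta) < \infty$ precisely because $n \ge 3$ (the residual $\sigma$-integral being a convergent Gamma function). This yields the desired pointwise bound with the claimed dependence on $n, \nu, \Theta$.

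Third, I would verify that the $G$ so defined really is the Green's function of $L$ on $\Omega$. For $\phi \in C_c^\infty(\Omega)$, Theorem~\ref{thm_main} produces the unique weak solution $u \in \mathring V_2^{1,0}((0,\infty) \times \Omega)$ of $\partial_t u + Lu = 0$ with $u(0,\cdot) = \phi$, namely $u(t,x) = \int_\Omega G_P(t,x,0,y)\phi(y)\,dy$; and the Gaussian bound yields $\norm{u(t,\cdot)}_{L_\infty(\Omega)} \le Ct^{-n/2}\norm{\phi}_{L_1(\Omega)} \to 0$ as $t \to \infty$. Testing the weak equation against a time-independent $\psi \in C_c^\infty(\Omega)$, integrating in $t$ over $(0,T)$ and letting $T \to \infty$, I would identify $v(x) := \int_0^\infty u(t,x)\,dt = \int_\Omega G(x,y)\phi(y)\,dy$ as a weak solution of $Lv = \phi$ in $\Omega$ with vanishing trace on $\partial \Omega$; this shows $L G(\cdot, y) = \delta_y$ distributionally with the required boundary behavior, and uniqueness of $G$ follows from Lemma~\ref{energy_ineq} applied to the difference of two candidates.

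The main obstacle lies in this last step, specifically the limit $T \to \infty$ and the associated interchange of $\int_0^\infty dt$ with the action of $L$. The finite-interval energy bound of Lemma~\ref{energy_ineq} only controls $u$ on bounded time intervals, so to push the weak formulation through $T = \infty$ I would combine it with the quantitative Gaussian decay of $u$ and with derivative-level decay obtained from a parabolic Caccioppoli inequality, so as to secure absolute convergence of the space-time integrals involving $\nabla u$, $\vec b u$, $\vec c \cdot \nabla u$, and $du$ on $(0,\infty) \times \Omega$. The scale invariance built into the Gaussian bound for $G_P$ is precisely what renders these tail estimates feasible.
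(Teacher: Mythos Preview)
Your approach is essentially the same as the paper's: define the elliptic Green's function as $G(x,y)=\int_0^\infty G_P(t,x,0,y)\,dt$ for the parabolic kernel $G_P$ supplied by Theorem~\ref{thm_main}, and integrate the Gaussian bound to obtain $\abs{G(x,y)}\le C\abs{x-y}^{2-n}$. The only difference is that the paper dispatches the identification of $G$ as the elliptic Green's function by citing \cite{DK09}, whereas you sketch the verification directly via the $T\to\infty$ limit of the weak formulation; your outline of that step (Gaussian decay plus Caccioppoli for the tail control) is sound, though note that your appeal to Lemma~\ref{energy_ineq} for elliptic uniqueness is not quite right as stated, since that lemma is parabolic---you would instead use the elliptic energy estimate that follows from (H1), (H3) in the same way.
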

\begin{proof}
Let $K_t(x,y)=\tilde G(t,x,0,y)$, where $\tilde G(t,x,s,y)$ is the Green's function for the operator $P$ with time independent coefficients; as mentioned in the introduction, when $p=n$, the condition (H2) can be relaxed to the weaker condition \eqref{eq_bmo_x}.
Let
\[
G(x,y)=\int_0^\infty K_t(x,y)\,dt.
\]
Then, it is known that $G(x,y)$ becomes the Green's function for the operator $L$; see, e.g., \cite{DK09}.
From the Gaussian bound \eqref{Gaussian bound}, it follows
\[
\abs{G(x,y)} \le \int_0^\infty \abs{K_t(x,y)}\,dt \le C \int_0^\infty t^{-n/2} e^{-\kappa \abs{x-y}^2/t}\,dt \le C \abs{x-y}^{2-n}.\qedhere
\]
\end{proof}

\begin{remark}			\label{rmk3.4}
Consider the second-order parabolic systems of divergence form
\[
P_{ij}u^j=\partial_t u^i - D_{\alpha}(a_{ij}^{\alpha\beta}(t,x) D_{\beta}u^j+b_{ij}^\alpha(t,x)u^j)
+ c_{ij}^{\alpha}(t,x) D_\alpha u^j + d_{ij}(t,x)u^{j},\quad i=1,\ldots, m,
\]
where the coefficients satisfy the following conditions analogous to (H).
\begin{enumerate}[({H}1')]
\item
There exists a constant $\nu \in (0,1)$ such that for all $(t,x) \in \mathscr D$, we have
\[
\nu \sum_{i=1}^m \sum_{\alpha=1}^n \, \abs{\xi^i_\alpha}^2 \le a^{\alpha\beta}_{ij}(t,x) \xi^i_\alpha \xi^j_\beta\quad\text{and}\quad
\sum_{i,j=1}^m \sum_{\alpha, \beta=1}^n \,\abs{a^{\alpha\beta}_{ij}(t,x)}^2 \le \nu^{-2}.
\]
\item
$\mathbf{b^\alpha}=(b^\alpha_{ij})$, $\,\mathbf{c^\alpha}=(c^\alpha_{ij})$ are symmetric and belong to some $L_{p,q}(\mathscr D)$, and $\mathbf{d}=(d_{ij})$ is contained in some $L_{p/2,q/2}(\mathscr D)$, where $p$ and $q$ are such that 
\begin{equation*}					%\label{condition p q}
2\le p, q \le \infty\quad\text{and}\quad \frac{n}{p}+\dfrac{2}{q}=1.
\end{equation*}
There exists a constant $\Theta > 0$ such that
\begin{equation*}				%\label{theta}
\sum_{\alpha=1}^n \,\norm{\mathbf{b^\alpha}-\mathbf{c^\alpha}}_{L_{p,q}(\mathscr D)}^2 \le \Theta^2.
\end{equation*}
\item
The following inequalities hold in the sense of distributions:
\[
\mathbf{d}-D_\alpha \mathbf{b}^\alpha \geq 0\quad\text{and}\quad
D_\alpha(\mathbf{b}^\alpha-\mathbf{c}^\alpha)\geq0,
\]
Here, $\mathbf{M}\ge 0$ for a matrix $\mathbf{M}$ means that $\mathbf{M}\vec \xi \cdot \vec \xi \ge 0$ for all $\vec \xi \in \bR^n$.
\end{enumerate}
Also, we assume that local boundedness property holds for the operator $P_{ij}$ and its adjoint operator.
Then the conclusion of Theorem~\ref{thm_main} is true. See \cite{DK18}.
\end{remark}

\section{Proof of Theorem \ref{thm_main}}			\label{sec_proof}
\subsection{Construction of the Green's function}
The proof for construction of Green's function is a modification of that given in \cite{DK18, CDK08}.
For reader's convenience we present main steps here.
Let $Y=(s,y)\in \mathscr D$.
For $\varepsilon>0$, fix $a \in(-\infty, s-\varepsilon^2)$ and $b \in (s, \infty)$.
We consider the problem
\begin{equation}		\label{Diri para prb}
Pv=\frac{1}{\abs{Q^{-}_\varepsilon(Y)}}1_{Q^{-}_\varepsilon(Y)}\;\text{ in }\;(a,b)\times \Omega,\qquad
v(a,\cdot)=0 \;\text{ on }\;\Omega,
\end{equation}
where $1_{Q^{-}_\varepsilon(Y)}$ is a characteristic function and 
$Q_\varepsilon^{-}(Y)$ is as defined in \eqref{eq1717f}.

By Lemma~\ref{lem_uniq}, there exists a unique weak solution $v_\varepsilon=v_{\varepsilon;Y}\in \mathring{V}_2^{1,0}((a,b)\times \Omega)$ of the problem \eqref{Diri para prb}.
Furthermore, by using the uniqueness, we find that the solution $v_\varepsilon$ does not depend on  $a$ or $b$, and we may extend $v_\varepsilon$ to entire $\mathscr D$ by setting $v_\varepsilon\equiv 0$ in $(-\infty,a)\times \Omega$ and letting $b \to \infty$.
Then by the energy estimate \eqref{energy estimate}, we have
\begin{equation}			\label{eq1644m}
\oldnorm{v_\varepsilon}_{\mathscr D}\le \abs{Q^{-}_\varepsilon(Y)}^{-\frac{n}{2(n+2)}}.
\end{equation}
We define the ``approximate'' Green's function $G^{\varepsilon}(\cdot,Y)$ for $P$ in $\mathscr D$ by
\[
G^{\varepsilon}(\cdot,Y)=v_\varepsilon.
\]

Next, for  $f\in C^\infty_c(\mathscr D)$, choose a number $b$ such that $f\equiv 0$ in $[b,\infty)\times \Omega$.
For any $a<b$, consider the backward problem
\begin{equation}		\label{Diri backpara prb}
P^{*}u=f \;\text{ in }\;(a,b)\times \Omega,\qquad
u(b,\cdot)=0 \;\text{ on }\; \Omega.
\end{equation}
By Lemma~\ref{lem_uniq} again, we obtain a unique weak solution $u\in \mathring{V}_2^{1,0}((a,b )\times\Omega)$ of the problem \eqref{Diri backpara prb}.
Again, we may extend $u$ to entire $\mathscr D$ by setting $u \equiv 0$ in $(b,\infty)\times \Omega$ and letting $a \to -\infty$.
The energy inequality \eqref{energy estimate} then tells us that
\begin{equation}			\label{eq2.00}
\oldnorm{u}_{\mathscr D} \le C\norm{f}_{L_{(2n+4)/(n+4)}(\mathscr D)}.
\end{equation}

Notice from \eqref{Diri para prb} and \eqref{Diri backpara prb} that we have
\begin{equation}			\label{eq2.17}
\int_{\mathscr D} G^\varepsilon(\cdot, Y) f=\fint_{Q^{-}_\varepsilon(Y)}u.
\end{equation}

Now, we assume that $f$ is supported in $Q^{+}_R(X_0)$, where it is defined in \eqref{eq1718f}.
By Lemma~\ref{loc_bdd} combined with \eqref{eq2.00} and \eqref{embd_ineq}, we have
\begin{equation} \label{eq2.19}
\norm{u}_{L_\infty(Q_{R/2}^+(X_0))} \le C R^{2} \norm{f}_{L_\infty(Q_R^{+}(X_0))}.
\end{equation}
If $Q^{-}_\varepsilon(Y)\subset Q^+_{R/2}(X_0)$, then \eqref{eq2.17} together with
\eqref{eq2.19} yields
\[
\Abs{\int_{Q^{+}_R(X_0)}G^\varepsilon(\cdot, Y) f \,} \le \fint_{Q^{-}_\varepsilon(Y)}\abs{u}\le CR^{2} \norm{f}_{L_{\infty}(Q^{+}_R(X_0))}.
\]
By duality, it follows that if $Q^{-}_\varepsilon(Y)\subset Q^{+}_{R/2}(X_0)$, then
\[
\norm{G^\varepsilon(\cdot, Y)}_{L_1(Q^{+}_R(X_0))}\le CR^{2}.
\]

Therefore, the same proof of \cite[Lemma~3.6]{DK18} yields the following lemma.

\begin{lemma}\label{lem G vare}
Let $X=(t,x)$, $Y=(s,y) \in \mathscr D$ with $X \neq Y$.
Then we have
\[
\abs{G^{\varepsilon}(X,Y)} \leq C\abs{X-Y}^{-n}, \quad \forall \varepsilon\leq\tfrac{1}{3}\abs{X-Y},
\]
where $C=C(n, \nu, p, \Theta)$.
\end{lemma}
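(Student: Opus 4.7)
The plan is to adapt the argument of \cite[Lemma~3.6]{DK18}: we combine the $L_1$ estimate
\[
\norm{G^\varepsilon(\cdot,Y)}_{L_1(Q_R^{+}(X_0))}\le CR^2 \quad\text{whenever}\quad Q_\varepsilon^{-}(Y)\subset Q_{R/2}^{+}(X_0),
\]
established just before the statement, with the scale-invariant local boundedness of Lemma~\ref{loc_bdd} applied to $G^\varepsilon(\cdot,Y)$ on a cylinder around $X$ that avoids the support $Q_\varepsilon^{-}(Y)$ of the source.

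Write $d:=\abs{X-Y}$. First, if $t\le s-\varepsilon^2$, then by construction $v_\varepsilon\equiv 0$ and the claim is trivial; this already handles the case $d=\sqrt{s-t}$, since then $s-t=d^2\ge 9\varepsilon^2$. In the remaining cases we set $r:=d/3$ and $X_*:=(t+r^2/8,x)$, so that $X\in Q_{r/2}^{-}(X_*)$. A short check shows that $Q_r^{-}(X_*)$ and $Q_\varepsilon^{-}(Y)$ are disjoint: whenever $d=\abs{x-y}$ the spatial balls satisfy $r+\varepsilon\le 2d/3<d$; whenever $d^2=t-s$ the time intervals are disjoint since $t-7r^2/8>s$. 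Thus $G^\varepsilon(\cdot,Y)$ is a weak solution of $Pv=0$ in $Q_r^{-}(X_*)$ vanishing on $S_r^{-}(X_*)$, and Lemma~\ref{loc_bdd} with $f=0$ yields
\[
\norm{G^\varepsilon(\cdot,Y)}_{L_\infty(Q_{r/2}^{-}(X_*))}\le N_0\, r^{-\frac{n+2}{2}}\norm{G^\varepsilon(\cdot,Y)}_{L_2(Q_r^{-}(X_*))}.
\]

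A standard Moser-type iteration on nested cylinders, together with the interpolation $\norm{u}_{L_2}^2\le \norm{u}_{L_\infty}\norm{u}_{L_1}$, then upgrades this $L_2$-based estimate to the $L_1$-based inequality
\[
\norm{G^\varepsilon(\cdot,Y)}_{L_\infty(Q_{r/2}^{-}(X_*))}\le C r^{-(n+2)}\norm{G^\varepsilon(\cdot,Y)}_{L_1(Q_r^{-}(X_*))}.
\]
To control the right-hand side, recall that $v_\varepsilon\equiv 0$ for $t'\le s-\varepsilon^2$, so the effective integration region is contained in the forward cylinder $Q_R^{+}(Y_*)$ with $Y_*:=(s-\varepsilon^2,y)$ and $R:=4d$. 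A direct check gives $Q_\varepsilon^{-}(Y)\subset Q_{R/2}^{+}(Y_*)$, so the quoted $L_1$ bound yields $\norm{G^\varepsilon(\cdot,Y)}_{L_1(Q_r^{-}(X_*))}\le CR^2\le Cd^2$. Substituting back and using $r\sim d$ produces the desired estimate $\abs{G^\varepsilon(X,Y)}\le Cd^{-n}$.

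I expect the main technical hurdle to be the Moser-type iteration that converts the $L_2$-based estimate of Lemma~\ref{loc_bdd} into the $L_1$-based version; the cylinder inclusions are elementary but must be verified case by case, according to whether the parabolic distance $d$ is realized by $\abs{x-y}$ or by $\sqrt{\abs{t-s}}$.
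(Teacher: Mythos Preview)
Your proposal is correct and follows essentially the same route as the paper, which simply refers the reader to the proof of \cite[Lemma~3.6]{DK18}; your write-up is a faithful unpacking of that argument, combining the $L_1$ bound derived just before the lemma with Lemma~\ref{loc_bdd} on a cylinder about $X$ disjoint from $Q_\varepsilon^{-}(Y)$, and then using the standard $L_2\to L_1$ upgrade via iteration. The geometric checks (disjointness of $Q_r^{-}(X_*)$ and $Q_\varepsilon^{-}(Y)$, and the inclusion of the effective support in $Q_R^{+}(Y_*)$) are all valid under $\varepsilon\le d/3$, so nothing is missing.
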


For $\rho$ and $R$ satisfying  $\varepsilon<\rho<R$, let $\zeta: \bR^{n+1} \to [0,1]$ be a smooth function satisfying $\zeta(X)= 0$ for $\abs{X-Y}<\rho$,  $\zeta(X)=1$ for $\abs{X-Y}\ge R$, and
\begin{equation*}		%\label{eq2216sun}
\max\left(\norm{D\zeta}_{L_\infty}^2,\; \norm{D^2\zeta}_{L_\infty}, \;\norm{\partial_t \zeta}_{L_\infty} \right) \le \frac{4}{(R-\rho)^2}.
\end{equation*}
Recall that $v_\varepsilon \in \mathring{V}^{1,0}_2(\mathscr D)$ and it satisfies \eqref{Diri para prb}.
Testing the equation with $\zeta^2 v_\varepsilon$, letting $a\to -\infty$, $b\to \infty$, and using that $d-\Div \vec b \ge 0$ in (H3), we obtain
\begin{multline*}
\frac12 \int_{\Omega} \zeta^2 v_\varepsilon^2(t,x)\,dx+\int_{-\infty}^t \!\int_\Omega \zeta^2 a^{ij}D_j v_\varepsilon D_i v_\varepsilon \,dxdt +\int_{-\infty}^t \!\int_\Omega 2 a^{ij} \zeta D_j v_\varepsilon D_i \zeta v_\varepsilon \,dxdt\\
\le \int_{-\infty}^t \!\int_\Omega \zeta\partial_t \zeta v_\varepsilon^2\,dxdt+\int_{-\infty}^t \!\int_{\Omega} \zeta^2 v_\varepsilon (b^i-c^i) D_iv_\varepsilon\,dxdt
\end{multline*}
for all $t$.
By the assumption that $\Div (\vec b-\vec c) \ge 0$ in (H3), we have
\begin{equation*}			%\label{eq1656f}
\int_{-\infty}^t \!\int_\Omega \zeta^2 v_\varepsilon (b^i-c^i)D_i v_\varepsilon\,dxdt \le -\int_{-\infty}^t \!\int_\Omega \zeta (b^i-c^i) v_\varepsilon^2 D_i\zeta\,dxdt.
\end{equation*}
Then by the condition (H1) and Young's inequality, we obtain
\begin{multline}			\label{eq1455sun}
\frac12 \norm{\zeta v_\varepsilon}_{L_{2,\infty}(\mathscr D)}^2+ \frac{3\nu}{4} \int_{\mathscr D} \zeta^2 \abs{Dv_\varepsilon}^2\,dxdt \\
\le 2 \int_{\mathscr D} \abs{\partial_t \zeta} v_\varepsilon^2\,dxdt+ \frac{8}{\nu^3}\int_{\mathscr D} \abs{D\zeta}^2 v_\varepsilon^2\,dxdt - 2\int_{\mathscr D}\zeta (b^i-c^i) v_\varepsilon^2 D_i \zeta\,dxdt.
\end{multline}

In the case when $p>n$ so that $q<\infty$, we use H\"{o}lder's inequality, the embedding \eqref{embd_ineq}, and Young's inequality, to find that
\begin{align}		\label{eq1801sun}
-2\int_{\mathscr D} \zeta (b^i-c^i)v_\varepsilon^2 D_i \zeta\,dxdt &\le 2\norm{\vec b-\vec c}_{L_{p,q}(\mathscr D)}\,\norm{\zeta v_\varepsilon}_{L_{2p/(p-2),2q/(q-2)}(\mathscr D)}\,\norm{D\zeta v_\varepsilon}_{L_2(\mathscr D)} \nonumber\\
&\le 2\beta \Theta \oldnorm{\zeta v_\varepsilon}_{\mathscr D}\,\norm{D\zeta v_\varepsilon}_{L_2(\mathscr D)} \nonumber\\
&\le \frac{\nu}{8} \oldnorm{\zeta v_\varepsilon}_{\mathscr D}^2+\frac{8\beta^2 \Theta^2}{\nu}\, \norm{D\zeta v_\varepsilon}_{L_2(\mathscr D)}^2.
\end{align}
Note that
\begin{align}		\label{eq1741f}
\oldnorm{\zeta v_\varepsilon}_{\mathscr D}^2 &\le 2 \norm{\zeta v_\varepsilon}_{L_{2,\infty}(\mathscr D)}^2+ 2\norm{D(\zeta v_\varepsilon)}_{L_2(\mathscr D)}^2	\nonumber\\
&\le 2 \norm{\zeta v_\varepsilon}_{L_{2,\infty}(\mathscr D)}^2+ 4\norm{D\zeta v_\varepsilon}_{L_2(\mathscr D)}^2+ 4\norm{\zeta Dv_\varepsilon}_{L_2(\mathscr D)}^2.
\end{align}
Since $0<\nu<1$, we get from \eqref{eq1455sun}, \eqref{eq1801sun}, and \eqref{eq1741f} that
\[
\norm{\zeta v_\varepsilon}_{L_{2,\infty}(\mathscr D)}^2+ \int_{\mathscr D} \zeta^2 \abs{Dv_\varepsilon}^2\,dxdt \le C \int_{\mathscr D} \left(\abs{\partial_t \zeta}+\abs{D\zeta}^2\right) v_\varepsilon^2\,dxdt.
\]
Then by using \eqref{eq1741f} again, taking $\rho=\frac12 r$ and $R=r$ for $\zeta$, and using Lemma~\ref{lem G vare}, we get
\begin{align*}	
\oldnorm{v_\varepsilon}_{\mathscr D\cap \set{X: \abs{X-Y}\ge r}}^2\le \oldnorm{\zeta v_\varepsilon}_{\mathscr D}^2 & \le \frac{C}{r^2} \int_{\mathscr D \cap \set{X: \frac12 r\le \abs{X-Y}\le r}} v_\varepsilon^2\,dxdt\\
 &\le  \frac{C}{r^2} \int_{\set{X: \frac12 r\le \abs{X-Y}\le r}} \abs{X-Y}^{-2n}\,dX \le \frac{C}{r^n}\le C \abs{Q_r^{-}(Y)}^{-\frac{n}{n+2}}
\end{align*}
provided $6 \varepsilon \le r$.
If $r>6\varepsilon$, then thanks to \eqref{eq1644m}, the same inequality is obviously true.
Therefore, we have
\begin{equation}			\label{eq1455m}
\oldnorm{v_\varepsilon}_{\mathscr D\cap \set{X: \abs{X-Y}\ge r}}^2 \le C \abs{Q_r^{-}(Y)}^{-\frac{n}{n+2}},\quad \forall r>0,
\end{equation}
which corresponds to \cite[(3.20)]{CDK08}.
With the uniform estimate \eqref{eq1455m} at hand, we may invoke the same compactness argument as presented in \cite[Section 3.3]{CDK08} and obtain a Green's function $G(\cdot,Y)$ from the family $\set{v_\varepsilon}=\set{G^\varepsilon(\cdot ,Y)}$ in the case when $p>n$.

In the case when $p=n$ and $q=\infty$, we use the following facts.
\begin{enumerate}[1.]
\item
There exist functions $\Phi^{ij}$ on $\bR^{n+1}$ satisfying
\begin{equation}	\label{eq_bmo_x}
(b^i -c^i)1_{\mathscr D} =D_j  \Phi^{ij}\,\quad \sup_{t\in\bR}\,\sum_{i,j=1}^{n} \,\norm{\Phi^{ij}(t,\cdot)}_\mathrm{BMO(\bR^n)}^2\leq C_n \norm{\vec b-\vec c}_{L_{n,\infty}}^2 \le C_n \Theta^2,
\end{equation}
where $C_n$ is a constant which depends only on $n$.
\item
For $f$, $g \in W^1_2(\bR^n)$ and $j=1,\ldots, n$, we have
\begin{equation}	\label{ccpi}
\norm{D_j(fg)}_{\mathscr{H}^1(\bR^n)}\leq C_n \left( \norm{Df}_{L_2(\bR^n)}\norm{g}_{L_2(\bR^n)}+\norm{f}_{L_2(\bR^n)}\norm{Dg}_{L_2(\bR^n)}\right),
\end{equation}
where $\norm{\cdot}_{\mathscr{H}^1(\bR^n)}$ denotes the Hardy norm.
\end{enumerate}
We note that \eqref{eq_bmo_x} is a consequence of the embedding $L_n(\bR^n) \hookrightarrow  \mathrm{BMO}^{-1}(\bR^n)$. See, e.g., \cite{KT01}.
Estimates of type \eqref{ccpi} are originally due to Coifman et al. \cite{CLMS} and usually referred to as ``compensated compactness''. 
See \cite[Proposition~3.2]{QX1} for the proof of \eqref{ccpi}.

By setting $v_\varepsilon(t,\cdot)=0$ outside $\Omega$ and applying \eqref{eq_bmo_x}, we  get
\begin{multline}				\label{eq1903sun}
-2\int_\Omega \zeta (b^i-c^i) v_\varepsilon^2 D_i \zeta
=2\int_\Omega \Phi^{ij}D_j  (\zeta v_\varepsilon^2 D_i \zeta)
\le 2 \norm{\Phi^{ij}}_{\mathrm{BMO}(\bR^n)}\,\norm{D_j (\zeta v_\varepsilon^2 D_i \zeta)}_{\mathscr{H}^{1}(\bR^n)} \\
\le C_n \Theta \left(\sum_{i,j=1}^n \norm{D_j(\zeta v_\varepsilon^2 D_i \zeta)}_{\mathscr{H}^1(\bR^n)}^2 \right)^{\frac12}=:\mathrm{RHS}.
\end{multline}
Fix a smooth function $\tilde \zeta:\bR^{n+1} \to [0,1]$ such that $\tilde \zeta(X)=1$ for $\abs{X-Y}<R$,  $\tilde \zeta(X)=0$ for $\abs{X-Y} \ge 2R$, and $\norm{D\tilde \zeta}_{L_\infty}\le 2/R$.
Then, since
\[
\zeta v_\varepsilon^2 D_i \zeta= \tilde \zeta \zeta v_\varepsilon^2 D_i \zeta,
\]
by taking $f=\tilde \zeta \zeta v_\varepsilon$ and $g=v_\varepsilon D_i \zeta$ in \eqref{ccpi}, and using Young's inequality, the right hand side of \eqref{eq1903sun}  is bounded by
\begin{align*}
\mathrm{RHS}&\le C \left\{ \left(\norm{D(\tilde \zeta \zeta)v_\varepsilon}_{L_2}+\norm{\tilde\zeta \zeta Dv_\varepsilon}_{L_2}\right) \norm{v_\varepsilon D\zeta}_{L_2} +\norm{\tilde \zeta \zeta v_\varepsilon}_{L_2} \left( \norm{D\zeta\tran Dv_\varepsilon}_{L_2}+\norm{v_\varepsilon D^2\zeta}_{L_2}\right)\right\}\\
& \le C \norm{D(\tilde \zeta \zeta) v_\varepsilon}_{L_2}^2 + C \norm{D \zeta v_\varepsilon}_{L_2}^2 + \frac{\nu}{4}\norm{\tilde \zeta \zeta Dv_\varepsilon}_{L_2}^2\\
&\qquad +\frac{C}{(R-\rho)^2}\norm{\tilde \zeta \zeta v_\varepsilon}_{L_2}^2 +\frac{\nu(R-\rho)^2}{16} \norm{D\zeta\tran D v_\varepsilon}_{L_2}^2 + (R-\rho)^2 \norm{D^2 \zeta v_\varepsilon}_{L_2}^2,
\end{align*}
where $C=C(n, \nu, \Theta)$.
Then, by integrating \eqref{eq1903sun} with respect to $t$ over $(-\infty, \infty)$, and using the properties of $\zeta$ and $\tilde \zeta$, we have
\begin{multline}			\label{eq1737sun}
-2\int_{\mathscr D} \zeta (b^i-c^i) v_\varepsilon^2 D_i \zeta \le \frac{\nu}{4} \int_{\mathscr D} \zeta^2 \abs{Dv_\varepsilon}^2\\
+ \frac{C}{(R-\rho)^2} \int_{\set{X: \rho \le \abs{X-Y}\le 2R}\cap \mathscr D} v_\varepsilon^2
+\frac{\nu}{4} \int_{\set{X: \rho \le \abs{X-Y}\le R}\cap \mathscr D} \abs{D v_\varepsilon}^2.
\end{multline}
Putting \eqref{eq1737sun} back to \eqref{eq1455sun} and using the properties of $\zeta$, we obtain
\begin{multline}			\label{eq2139m}
\frac12 \norm{\zeta v_\varepsilon}_{L_{2,\infty}(\mathscr D)}^2+ \frac{\nu}{2} \int_{\mathscr D} \zeta^2 \abs{Dv_\varepsilon}^2 \\
\le \frac{C}{(R-\rho)^2} \int_{\set{X: \rho \le \abs{X-Y}\le 2R}\cap \mathscr D} v_\varepsilon^2
+\frac{\nu}{4} \int_{\set{X: \rho \le \abs{X-Y}\le R}\cap \mathscr D} \abs{D v_\varepsilon}^2.
\end{multline}
In particular, \eqref{eq2139m} implies that
\[
\int_{\set{X:  \abs{X-Y} \ge R}\cap \mathscr D} \abs{Dv_\varepsilon}^2 \\
\le  \frac{C}{(R-\rho)^2} \int_{\set{X: \rho \le \abs{X-Y}\le 2R}\cap \mathscr D} v_\varepsilon^2 +\frac12 \int_{\set{X: \abs{X-Y}\ge \rho}\cap \mathscr D} \abs{D v_\varepsilon}^2.
\]
Since the above inequality is true for all $\rho$ and $R$ satisfying $\varepsilon<\rho<R$, a well-known iteration argument yields (see \cite[Lemma~5.1]{Giaquinta93}) that for any $r$ satisfying $\varepsilon < r< \infty$, we have 
\[
\int_{\set{X: \abs{X-Y}\ge 2r}\cap \mathscr D} \abs{Dv_\varepsilon}^2 \\
\le \frac{C}{r^2} \int_{\set{X: r \le \abs{X-Y}\le 4r}\cap \mathscr D} v_\varepsilon^2.
\]
Then, by taking $\rho=2r$ and $R=4r$, we get from \eqref{eq2139m} and Lemma~\ref{lem G vare} that
\begin{align*}
\oldnorm{v_\varepsilon}_{\mathscr D\cap \set{X: \abs{X-Y}\ge 4r}}^2 \le \oldnorm{\zeta v_\varepsilon}_{\mathscr D}^2 &\le \frac{C}{r^2} \int_{\set{X: 2r \le \abs{X-Y}\le 8r}\cap \mathscr D} v_\varepsilon^2 + \int_{\set{X: \abs{X-Y} \ge 2r}\cap \mathscr D} \abs{D v_\varepsilon}^2\\
&\le \frac{C}{r^2} \int_{\set{X: r \le \abs{X-Y}\le 8r}\cap \mathscr D} v_\varepsilon^2\\
& \le \frac{C}{r^2} \int_{\set{X: r \le \abs{X-Y}\le 8r}} \abs{X-Y}^{-2n}\,dX  \le \frac{C}{r^n} \le C \abs{Q_r(Y)}^{-\frac{n}{n+2}}
\end{align*}
provided that $3 \varepsilon \le r$. 
Again, thanks to \eqref{eq1644m}, we get the uniform estimate \eqref{eq1455m}, which allows us to construct a Green's function $G(\cdot,Y)$ out of the family  $\set{G^\varepsilon(\cdot ,Y)}$ in the case when $p=n$.

Also, by parallel reasonings, we can construct a Green's function $G^{*}(X,Y)$ for the adjoint operator $P^{*}$.
We refer to \cite[Section 3.5]{CDK08} for the proof of the representation formula  \eqref{idetity 00}, which also shows the uniqueness of Green's function.
This completes the proof of the first part of Theorem \ref{thm_main}.

\subsection{Gaussian estimates}
We now prove the Gaussian estimate \eqref{Gaussian bound}.
\subsubsection{Case when $p=n$}
In the case when $p=n$ and $q=\infty$, we follow the argument in \cite{DK18}, which is an adaptation of the techniques in \cite{Davies, CDK08, HK04}, to obtain Gaussian bound \eqref{Gaussian bound}.
Here, we shall make strong use of \eqref{eq_bmo_x} and \eqref{ccpi}.

Now, let $\psi:\bR^n \rightarrow\bR$ be a bounded $C^2$ function satisfying
\begin{equation*}		%\label{cond psi}
\abs{D\psi} \leq\gamma_1,\quad \abs{D^2\psi} \leq\gamma_2,
\end{equation*}
for some positive constants $\gamma_1$ and $\gamma_2$ to be fixed later. For $t>s$, we define an operator $P_{s\rightarrow t}^{\psi}$ on $L_2(\Omega)$ as follows.
For a given $f\in L_2(\Omega)$, let $u\in\mathring{V}_2^{1,0}((s,\infty)\times\Omega )$ be the weak solution of the problem
\begin{equation}\label{prob Diri}
\begin{cases}
Pu=0,\\
u(s,\cdot)=e^{-\psi}f.
\end{cases}
\end{equation}
Then we define $P_{s \to t}^\psi f(x):=e^{\psi(x)}u(t,x)$.
It follows from \eqref{idetity 00} that
\begin{equation}\label{formula Pf}
P_{s\to t}^\psi f(x)=e^{\psi(x)}\int_{\Omega}G(t,x,s,y)e^{-\psi(y)}f(y)\ dy.
\end{equation}
Denote
\begin{equation}\label{def It}
I(t):=\norm{P_{s \to t}^{\psi}f}_{L_2(\Omega)}^2=\int_{\Omega }e^{2\psi(x)} \abs{u(t,x)}^2\ dx,\quad t\geq s.
\end{equation}
By using the equation \eqref{prob Diri} and the condition (H), we have
\begin{align}		\label{est I'}
I'(t) &=-2\int_{\Omega} (a^{ij}D_j  u+b^i u)D_i (e^{2\psi}u)+c^i D_i u e^{2\psi}u+du e^{2\psi}u \nonumber\\
&=-2\int_{\Omega} a^{ij}D_j  u (e^{2\psi} D_i u + 2 e^{2\psi}  u D_i \psi) +(c^i  -b^i)D_i u e^{2\psi}u +b^i  D_i( e^{2\psi} u^2)+d e^{2\psi}u^2  \nonumber\\
&\le -2\int_{\Omega} a^{ij}D_j  u (e^{2\psi} D_i u + 2 e^{2\psi}  u D_i \psi) +(c^i  -b^i)D_i u e^{2\psi}u  \nonumber\\
&= -2\int_{\Omega} a^{ij}D_j  u (e^{2\psi} D_i u + 2 e^{2\psi}  u D_i \psi) + \int_{\Omega} (b^i  -c^i)(D_i (e^{2\psi}u^2)-2u^2 e^{2\psi} D_i\psi) \nonumber\\
&\le  -2\nu \int_{\Omega }e^{2\psi}\abs{Du}^2+\frac{4 \gamma_1}{\nu} \int_{\Omega}e^{2\psi}\abs{u} \,\abs{Du}-2\int_{\Omega}(b^i-c^i) e^{2\psi}u^2D_i \psi.
\end{align}
By setting $u=0$ outside $\Omega$ and applying \eqref{eq_bmo_x}, we have
\begin{align}\label{b-c exp}
-2\int_{\Omega}(b^i-c^i) e^{2\psi}u^2 D_i \psi
&=2\int_{\Omega}\Phi^{ij}D_j  (e^{2\psi}u^2D_i \psi)\nonumber\\
&\le  C_n \norm{\Phi^{ij}}_{\mathrm{BMO}(\bR^n)}\,\norm{D_j (e^{2\psi}u^2 D_i \psi)}_{\mathscr{H}^{1}(\bR^n)},
\end{align}
By taking $f=e^\psi u$ and $g=e^\psi u D_i \psi$ in \eqref{ccpi}, we have 
\begin{align*}
\norm{D_j  (e^{2\psi}u^2D_i \psi)}_{\mathscr{H}^1}
&\le C_n \left\{ \left(\norm{e^{\psi}uD\psi}_{L_2}+\norm{e^{\psi}Du}_{L_2}\right) \norm{e^{\psi}u D\psi}_{L_2} \right.\\
&\qquad + \norm{e^{\psi}u}_{L_2} \left. \left( \norm{e^{\psi}u \abs{D\psi}^2}_{L_2}+\norm{e^{\psi}uD^2\psi}_{L_2}+\norm{e^{\psi}DuD\psi}_{L_2}\right)\right\}\\
&\le C_n \left((2\gamma_1^2+\gamma_2)\, \norm{e^\psi u}_{L_2}^2+2\gamma_1 \norm{e^\psi Du}_{L_2} \norm{e^\psi u}_{L_2}\right).
\end{align*}
Substituting the above into \eqref{b-c exp} and using \eqref{eq_bmo_x}, we obtain
\begin{multline*}
-2\int_{\Omega}(b^i-c^i) e^{2\psi}u^2 D_i \psi\\
\le C_n \Theta\left((2\gamma_1^2+\gamma_2)\int_{\Omega}e^{2\psi}u^2+2\gamma_1\left(\int_{\Omega}e^{2\psi}\abs{Du}^2\right)^{\frac12}\left(\int_{\Omega}e^{2\psi}u^2\right)^{\frac12}\right).
\end{multline*}
Coming back to \eqref{est I'} and using Young's inequality and H\"{o}lder's inequality, we obtain the differential inequality
\begin{equation}			\label{eq1523sat}
I'(t)\leq \left((4/\nu^3+C_n^2\Theta^2/\nu+2C_n\Theta)\gamma_1^2+C_n\Theta\gamma_2 \right) I(t).
\end{equation}
Recalling the initial condition $I(s)=\norm{f}_{L_2(\Omega)}^2$ and the definition \eqref{def It}, we obtain the $L_2 \to L_2$ estimate
\begin{align}\label{Ps-t f}
\norm{P_{s \to t}^\psi f}_{L_2(\Omega )}\leq e^{(\lambda \gamma_1^2+\mu\gamma_2)(t-s)}\norm{f}_{L_2(\Omega )},\quad \forall t \ge s,
\end{align}
where we set
\[
2\lambda:=4/\nu^3+C_n^2\Theta^2/\nu+2C_n\Theta\quad\text{and}\quad 2\mu:=C_n\Theta.
\]
With \eqref{Ps-t f} and Lemma~\ref{loc_bdd} at hand, we replicate the same arguments in \cite[p.~3028]{DK18} to obtain the $L_2 \to L_\infty$ estimate
\begin{equation}	\label{Pf Linfty}
\norm{P_{s \to t}^\psi f}_{L_{\infty}(\Omega)}\leq C(t-s)^{-\frac{n}{4}}e^{\gamma_1\sqrt{t-s}+(\lambda\gamma_1^2+\mu\gamma_2)(t-s)}\norm{f}_{L_2(\Omega)},\quad \forall t\ge s.
\end{equation}

Let the operator $Q_{t \to s}^\psi$ on $L_2(\Omega)$ for $s<t$ be given by
\[
Q_{t \to s}^\psi g(y)=e^{-\psi(y)}v(s,y)
\]
and denote
\[
J(s):=\norm{Q_{t \to s}^{\psi}f}_{L_2(\Omega)}^2=\int_{\Omega }e^{-2\psi} \abs{v(s,y)}^2\ dy,\quad s\leq t,
\]
where $v\in\mathring{V}_2^{1,0}((-\infty,t)\times \Omega)$ is the weak solution of the backward problem
\begin{equation}\label{Diri backpara prb gaussian}
\begin{cases}
P^{*}v=0,\\
v(t,\cdot)=e^{\psi}g.
\end{cases}
\end{equation}
Then similar to \eqref{est I'}, we have
\begin{align}		\label{est J'}
J'(s) &=2\int_{\Omega} (a^{ji}D_j  v+c^i v)D_i (e^{-2\psi}v)+b^i D_i ve^{-2\psi}v+e^{-2\psi}d v^2	\nonumber\\
&=2\int_{\Omega} a^{ji }D_j v D_i(e^{-2\psi}v)+b^i D_i(e^{-2\psi}v^2)+d e^{-2\psi}v^2+(c^i-b^i)v D_i(e^{-2\psi} v)	\nonumber\\
&\ge 2\int_{\Omega} a^{ji}D_j v(e^{-2\psi} D_i v -2e^{-2\psi} v D_i\psi )+(c^i-b^i) v D_i(e^{-2\psi}v)	\nonumber\\
&= 2\int_{\Omega} a^{ji}D_j v(e^{-2\psi} D_iv -2 e^{-2\psi} v D_i\psi)-2\int_{\Omega}(b^i-c^i) (D_i(e^{-2\psi} v^2)-e^{-2\psi} v D_iv)	\nonumber\\
&\ge 2\nu \int_{\Omega} e^{-2\psi} \abs{Dv}^2- \frac{4\gamma_1}{\nu} \int_{\Omega}e^{-2\psi}\abs{v} \,\abs{Dv}+2\int_{\Omega} (b^i-c^i) e^{-2\psi} v D_iv.	
\end{align}
Therefore, similar to \eqref{eq1523sat}, we have
\[
J'(s) \ge -\left((4/\nu^3+C_n^2\Theta^2/\nu+2C_n\Theta)\gamma_1^2+C_n\Theta\gamma_2 \right) J(s),
\]
and thus, similar to \eqref{Pf Linfty}, we obtain
\begin{equation}	\label{Pg Linfty}
\norm{Q_{t\rightarrow s}^{\psi}g}_{L_{\infty}(\Omega )}\leq C(t-s)^{-\frac{n}{4}}e^{\gamma_1\sqrt{t-s}+(\lambda \gamma_1^2+\mu\gamma_2)(t-s)}\norm{g}_{L_2(\Omega )},\quad \forall s\le t.
\end{equation}
From \eqref{prob Diri}, \eqref{Diri backpara prb gaussian}, and the definitions of $P_{s \to t}^\psi f$ and $Q_{t \to s}^\psi g$, we obtain the duality relation
\begin{equation}			\label{duality}
\int_{\Omega}(P_{s \to t}^\psi f) \, g=\int_{\Omega}f \,(Q_{t \to s}^\psi g).
\end{equation}
This combined with \eqref{Pg Linfty} yields the $L_1 \to L_2$ estimate
\begin{equation}		\label{Pf L2}
\norm{P_{s\rightarrow t}^{\psi}f}_{L_2(\Omega)}\leq C(t-s)^{-\frac{n}{4}}e^{\gamma_1\sqrt{t-s}+(\lambda \gamma_1^2+\mu\gamma_2)(t-s)}\norm{f}_{L_1(\Omega)},\quad \forall f\in C_c^\infty(\Omega).
\end{equation}
Then by noting $P_{s\to t}^{\psi}f=P_{(t+s)/2\to t}^{\psi}\left(P_{s \to (t+s)/2}^{\psi}\,f\right)$, we find from \eqref{Pf Linfty} and \eqref{Pf L2} that
\[
\norm{P_{s\rightarrow t}^{\psi}f}_{L_{\infty}(\Omega)}\leq C(t-s)^{-\frac{n}{2}}e^{\gamma_1\sqrt{2(t-s)}+(\lambda \gamma_1^2+\mu\gamma_2)(t-s)} \norm{f}_{L_1(\Omega)},\quad \forall f\in C_c^\infty(\Omega).
\]
For fixed $x$, $y\in\Omega$ with $x\neq y$, we obtain from the above estimate and \eqref{formula Pf} that
\begin{equation}		\label{eq2255sat}
e^{\psi(x)-\psi(y)}\abs{G(t,x,s,y)}\leq C(t-s)^{-\frac{n}{2}}e^{\gamma_1\sqrt{2(t-s)}+(\lambda \gamma_1^2+\mu\gamma_2)(t-s)}.
\end{equation}
This corresponds to \cite[(3.19)]{DK18} and by choosing an appropriate $\psi$, we obtain the Gaussian bound \eqref{Gaussian bound}.
See \cite[p. 3028]{DK18} for details.

\subsubsection{Case when $p>n$}
In the case when $p>n$, we combine the argument of Aronson \cite{Aronson68} with techniques in \cite{CDK08, HK04}.
Let $I(t)$ be as in \eqref{def It}.
It follows from \eqref{est I'} that
\[
I'(t) \le -2\nu \int_{\Omega} e^{2\psi}\abs{Du}^2+\frac{4 \gamma_1}{\nu} \int_{\Omega}e^{2\psi}\abs{u} \,\abs{Du}+2 \gamma_1 \int_{\Omega} \abs{\vec b-\vec c} \,e^{2\psi}\abs{u}^2.
\]
Let $\delta>0$ to be a number to be fixed later.
By integrating the above inequality in $t$ over $[t_1, t_2]$, where $s\le t_1 \le t_2 \le t_1+\delta$, and denoting
\[
S=[t_1, t_2] \times \Omega,
\]
we have
\begin{equation}		\label{eq1656sat}
I(t_2) + 2\nu \int_{S} e^{2\psi}\abs{Du}^2 \le I(t_1)+ \frac{4 \gamma_1}{\nu} \int_{S} e^{2\psi} \abs{u}\,\abs{Du}+ 2 \gamma_1\int_{S} \abs{\vec b-\vec c} \,e^{2\psi} \abs{u}^2.
\end{equation}
By Young's inequality, we have
\begin{equation}			\label{eq1702sat}
\frac{4 \gamma_1}{\nu} \int_{S} e^{2\psi} \abs{u}\,\abs{Du} 
\le \nu \int_S e^{2\psi} \abs{Du}^2 + \frac{4\gamma_1^2 \delta}{\nu^3}\,\norm{e^\psi u}_{L_{2,\infty}(S)}^2.
\end{equation}
Also, by H\"older's inequality, the condition (H), the embedding \eqref{embd_ineq},  and Young's inequality, we estimate
\begin{align*}
2\gamma_1 \int_{S} \abs{\vec b-\vec c} \,e^{2\psi} \abs{u}^2 &\le 2\gamma_1 \norm{\vec b-\vec c}_{L_{p,q}(S)}\norm{e^\psi u}_{L_2(S)} \norm{e^\psi u}_{L_{2p/(p-2),2q/(q-2)}(S)}\\
&\le 2 \gamma_1 \Theta \sqrt{\delta} \,\norm{e^\psi u}_{L_{2,\infty}(S)}\, \oldnorm{e^\psi u}_S
\le \frac{\nu}{4}\oldnorm{e^\psi u}_S^2+ \frac{4\gamma_1^2 \Theta^2 \delta}{\nu}\norm{e^\psi u}_{L_{2,\infty}(S)}^2,
\end{align*}
where we use the fact that $(\tilde p, \tilde q)=(\frac{2p}{p-2},\frac{2q}{q-2})$ satisfy \eqref{cond q r embed}.
Note that 
\begin{align*}
\oldnorm{e^\psi u}_S^2 &\le 2\norm{e^\psi u}_{L_{2,\infty}(S)}^2+2\norm{D(e^\psi u)}_{L_2(S)}^2\\
&\le 2\norm{e^\psi u}_{L_{2,\infty}(S)}^2+ 4 \gamma_1^2 \delta \norm{e^\psi u}_{L_{2,\infty}(S)}^2 + 4 \norm{e^\psi Du}_{L_2(S)}^2.
\end{align*}
Combining the above inequalities, we have
\begin{equation}			\label{eq1657sat}
2 \gamma_1 \int_{S} \abs{\vec b-\vec c} \,e^{2\psi} \abs{u}^2 \le \nu \int_S e^{2\psi} \abs{Du}^2 + \left(\frac{\nu}{2}+ \nu \gamma_1^2 \delta+\frac{4\gamma_1^2 \Theta^2 \delta}{\nu} \right)\norm{e^\psi u}_{L_{2,\infty}(S)}^2.
\end{equation}
By substituting \eqref{eq1702sat} and \eqref{eq1657sat} back to \eqref{eq1656sat}, we obtain
\begin{equation}			\label{eq2103sat}
I(t_2) \le I(t_1)+ \left(\frac{\nu}{2}+ \nu \gamma_1^2 \delta+\frac{4\gamma_1^2 \Theta^2 \delta}{\nu}+\frac{4\gamma_1^2 \delta}{\nu^3}\right)\norm{e^\psi u}_{L_{2,\infty}(S)}^2.
\end{equation}
Recall that $\nu \in (0,1)$.
We choose
\begin{equation}			\label{eq2131sat}
\delta=\frac{(3-2\nu)\nu^3}{4(\nu^4+4\Theta^2\nu^2+4)\gamma_1^2}\quad\text{so that}\quad
\frac{\nu}{2}+ \nu \gamma_1^2 \delta+\frac{4\gamma_1^2 \Theta^2 \delta}{\nu}+\frac{4\gamma_1^2 \delta}{\nu^3} = \frac34.
\end{equation}
Then, we take the supremum over $t_2 \in [t_1, t_1+\delta]$ in \eqref{eq2103sat} to get
\[
\max_{t_1 \le t \le  t_1+\delta} I(t) \le 4  I(t_1).
\]
In particular, by take $t_1=s$ and iterating, we have
\[
I(t) \le 4^j I(s)=4^j \norm{f}_{L_2(\Omega)}^2  \quad\text{if } s+(j-1)\delta \le t \le s+ j \delta,
\]
which combined with \eqref{eq2131sat} yields
\begin{equation}				\label{eq1741th}
I(t) \le 4 e^{2\mu \gamma_1^2 (t-s)} \norm{f}_{L_2(\Omega)}^2,\quad \forall t \ge s, \quad\text{where }\; \mu=\frac{2 (\nu^4+4\Theta^2\nu^2+4)\ln 4}{(3-2\nu)\nu^3}.
\end{equation}
which is equivalent to
\begin{equation}				\label{eq1755th}
\norm{P_{s \to t}^\psi f}_{L_2(\Omega)}\le  2 e^{\mu \gamma_1^2 (t-s)} \norm{f}_{L_2(\Omega)},\quad \forall t\ge s.
\end{equation}
With the $L_2 \to L_2$ estimate \eqref{eq1755th} and Lemma~\ref{loc_bdd} at hand, we replicate the same argument in \cite[p.~3028]{DK18} to obtain the $L_2 \to L_\infty$ estimate (c.f. \eqref{Pf Linfty})
\[
\norm{P_{s \to t}^\psi f}_{L_{\infty}(\Omega)}\le C(t-s)^{-\frac{n}{4}}e^{\gamma_1\sqrt{t-s}+ \mu \gamma_1^2 (t-s)}\norm{f}_{L_2(\Omega)},\quad \forall t\ge s.
\]
Similarly, we obtain from \eqref{est J'} that
\[
J(s) \le 4 e^{2\mu \gamma_1^2 (t-s)} \norm{g}_{L_2(\Omega)}^2,\quad \forall s\leq t.
\]
which, combined with Lemma~\ref{loc_bdd} and duality relation \eqref{duality}, yields the $L_1 \to L_2$ estimate (c.f. \eqref{Pf L2})
\[
\norm{P_{s \to t}^\psi f}_{L_2(\Omega)}\le  C(t-s)^{-\frac{n}{4}}e^{\gamma_1\sqrt{t-s}+ \mu \gamma_1^2 (t-s)} \norm{f}_{L_1(\Omega)},\quad  t\ge s,\quad \forall f\in C_c^\infty(\Omega).
\]
Then, similar to \eqref{eq2255sat}, for $x\neq y$, we have
\[
e^{\psi(x)-\psi(y)}\abs{G(t,x,s,y)}\leq C(t-s)^{-\frac{n}{2}}e^{\gamma_1\sqrt{2(t-s)}+\mu \gamma_1^2 (t-s)}, \quad t>s.
\]
This corresponds to \cite[(5.8)]{CDK08}.
Note that $\mu$, which is specified in \eqref{eq1741th}, depends only on $\nu$ and $\Theta$.
By choosing the function $\psi$ appropriately, we obtain the Gaussian bound \eqref{Gaussian bound}. See \cite[p. 1670]{CDK08} for details.

\section{Proof of Lemma \ref{loc_bdd}}			\label{sec7}
The proof is based on an original idea of De Giorgi \cite{DeGiorgi} in the parabolic context as appears in \cite{LSU}.
See  Seregin et al. \cite{SSSZ} and Nazarov and Ural'tseva \cite{NU12} for related results.
We restrict ourselves to the case when $u$ is a weak solution of $Pu=f$ in $Q_r^{-}$. The proof for the other case requires just a routine adjustment and we leave the details to the readers.

\subsection{Case when $p>n$}
We shall first treat the case when $p>n$ so that $q<\infty$.
Let us denote
\[
v =(u-k)_{+}=\max(u-k, 0),
\]
where $k> 0$ is to be chosen, and let $\zeta:\bR^{n+1} \to [0,1]$ be a smooth cut-off function such that
\[
\supp (\zeta) \cap \set{t \le t_0} \subset (t_0-r^2, t_0] \times B_r(x_0).
\]
In what follows we shall write $\Omega_r=\Omega\cap B_r(x_0)$ and $Q_r^{-}=Q_r^{-}(X_0)$ for brevity.
By testing $Pu=f$ with $\zeta^2v$, using the assumption that $d-\Div\vec b\ge 0$ together with $\zeta^2 uv \ge 0$, and noting that $Du = Dv$ on the set $\set{u>k}=\set{v> 0}$, we obtain
\begin{multline*}
\frac12 \int_{\Omega_r}  \zeta^2 v^2(t_1,x)\,dx+\int_{t_0-r^2}^{t_1} \!\int_{\Omega_r} \zeta^2 a^{ij}D_j v D_iv \,dxdt +\int_{t_0-r^2}^{t_1} \!\int_{\Omega_r} 2 a^{ij} \zeta D_j v D_i \zeta v \,dxdt\\
\le \int_{t_0-r^2}^{t_1} \!\int_{\Omega_r} \zeta\partial_t \zeta v^2\,dxdt+\int_{t_0-r^2}^{t_1} \!\int_{\Omega_r}\zeta^2 v (b^i-c^i) D_iv\,dxdt+\int_{t_0-r^2}^{t_1} \!\int_{\Omega_r} f \zeta^2 v\,dxdt
\end{multline*}
for all $t_1$ satisfying $t_0-r^2 \le t_1 \le t_0$.
By the assumption that $\Div (\vec b-\vec c) \ge 0$, we have
\begin{equation*}			%\label{eq1656f}
\int_{t_0-r^2}^{t_1} \!\int_{\Omega_r}\zeta^2 v (b^i-c^i) D_i v\le -\int_{t_0-r^2}^{t_1} \!\int_{\Omega_r} \zeta (b^i-c^i) v^2 D_i\zeta.
\end{equation*}
Then by the condition (H1) and Young's inequality, we obtain
\begin{multline}			\label{eq1455th}
\frac12 \norm{\zeta v}_{L_{2,\infty}(Q^{-}_r)}^2+ \frac{\nu}{2} \int_{Q^{-}_r} \zeta^2 \abs{Dv}^2 \\
\le 2 \int_{Q_r^{-}} \abs{\partial_t \zeta} v^2+ \frac{4}{\nu^3}\int_{Q^{-}_r} \abs{D\zeta}^2 v^2 - 2\int_{Q^{-}_r}\zeta (b^i-c^i) v^2 D_i \zeta +2\int_{Q^{-}_r} f\zeta^2 v.
\end{multline}
We estimate the last two terms as follows.
Note that $\zeta v \in \mathring{V}_2(Q^{-}_r)$.
By using H\"{o}lder's inequality, Young's inequality, and the embedding \eqref{embd_ineq}, we have
\begin{align}			\label{est f int}
\int_{Q^{-}_r} f\zeta^2 v &\le \norm{f}_{L_\infty(Q^{-}_r)} \norm{\zeta v}_{L_{2(n+2)/n}(Q^{-}_r)} \,\abs{Q^{-}_r \cap\set{u>k}}^{1-\frac{n}{2(n+2)}}		\nonumber\\
&\le \beta \norm{f}_{L_\infty(Q^{-}_r)} \oldnorm{\zeta v}_{Q^{-}_r} \,\abs{Q^{-}_r \cap\set{u>k}}^{\frac{n+4}{2(n+2)}}		\nonumber\\
&\leq \frac{\nu}{64} \oldnorm{\zeta v}_{Q^{-}_r}^2+ \frac{16\beta^2}{\nu} \norm{f}_{L_\infty(Q^{-}_r)}^2 \,
\abs{Q^{-}_r \cap \set{u>k}}^{\frac{n+4}{n+2}}.
\end{align}
By H\"{o}lder's inequality, the embedding \eqref{embd_ineq}, and Young's inequality, we obtain
\begin{align}		\label{est penul term}
-\int_{Q^{-}_r} \zeta (b^i-c^i)v^2 D_i \zeta &\le \norm{\vec b-\vec c}_{L_{p,q}(Q^{-}_r)}\norm{\zeta v}_{L_{2p/(p-2),2q/(q-2)}(Q^{-}_r)}\norm{D\zeta v}_{L_2(Q^{-}_r)} \nonumber\\
&\le \beta \Theta \oldnorm{\zeta v}_{Q^{-}_r}\norm{D\zeta v}_{L_2(Q^{-}_r)} \nonumber\\
&\le \frac{\nu}{64} \oldnorm{\zeta v}_{Q^{-}_r}^2+\frac{16\beta^2 \Theta^2}{\nu}  \norm{D\zeta}_{L_\infty}^2 \norm{v}_{L_2(Q^{-}_r)}^2.
\end{align}
where we used the fact that the pair $(\frac{2p}{p-2}, \frac{2q}{q-2})$ satisfy the condition \eqref{cond q r embed}.%\footnote{We only need $p>n$ when $n=2$.}

It follows from \eqref{eq1455th}, \eqref{est f int}, and  \eqref{est penul term} that
\begin{multline*}		%\label{eq1811th}
\frac12 \norm{\zeta v}_{L_{2,\infty}(Q^{-}_r)}^2+ \frac{\nu}{2} \norm{\zeta Dv}_{L_2(Q^{-}_r)}^2
\le C(\nu,\beta, \Theta) \left(\norm{\partial_t \zeta}_{L_\infty} + \norm{D\zeta}_{L_\infty}^2 \right) \norm{v}_{L_2(Q^{-}_r)}^2 \\
+\frac{32\beta^2}{\nu} \norm{f}_{L_\infty(Q^{-}_r)}^2 \, \abs{Q^{-}_r \cap \set{u>k}}^{\frac{n+4}{n+2}} 
+\frac{\nu}{16} \oldnorm{\zeta v}_{Q^{-}_r}^2.
\end{multline*}
Since
\begin{align*}		%\label{eq1741f}
\oldnorm{\zeta v}_{Q^{-}_r}^2 &\le 2 \norm{\zeta v}_{L_{2,\infty}(Q^{-}_r)}^2+ 2\norm{D(\zeta v)}_{L_2(Q^{-}_r)}^2	\nonumber\\
&\le 4 \norm{\zeta v}_{L_{2,\infty}(Q^{-}_r)}^2+ 4\norm{D\zeta v}_{L_2(Q^{-}_r)}^2+ 4\norm{\zeta Dv}_{L_2(Q^{-}_r)}^2
\end{align*}
and $0<\nu <1$, it follows that
\begin{multline}		\label{eq1811th}
\oldnorm{\zeta (u-k)_{+}}_{Q^{-}_r}^2\le C \left(\norm{\partial_t \zeta}_{L_\infty} + \norm{D\zeta}_{L_\infty}^2 \right) \norm{(u-k)_{+}}_{L_2(Q^{-}_r)}^2\\
+C\norm{f}_{L_\infty(Q^{-}_r)}^2 \,\abs{Q^{-}_r \cap \set{u>k}}^{\frac{n+4}{n+2}},
\end{multline}
where $C=C(\nu, \beta, \Theta)=C(n, \nu, p, \Theta)$.
On the other hand, by H\"older's inequality we have
\begin{equation}		\label{eq1320th}
\norm{\zeta (u-k)_{+}}_{L_2(Q^{-}_r)}^2 \le \abs{Q^{-}_r \cap \set{u>k}}^{\frac{2}{n+2}}\,\norm{\zeta (u-k)_{+}}_{L_{2(n+2)/n}(Q^{-}_r)}^2.
\end{equation}
It follows from \eqref{eq1320th}, \eqref{eq1811th}, and the embedding \eqref{embd_ineq} that
\begin{multline}			\label{eq1558w}
\norm{\zeta (u-k)_{+}}_{L_2(Q^{-}_r)}^2 \le C \left(\norm{\partial_t \zeta}_{L_\infty} + \norm{D\zeta}_{L_\infty}^2 \right) \norm{(u-k)_{+}}_{L_2(Q^{-}_r)}^2\,\abs{Q^{-}_r \cap \set{u>k}}^{\frac{2}{n+2}} \\
+Cr^2 \norm{f}_{L_\infty(Q^{-}_r)}^2\,\abs{Q^{-}_r \cap \set{u>k}}^{\frac{n+4}{n+2}}.
\end{multline}

Now, for $m=1$, $2$, $\ldots$, we set
\[
r_m =r\left(\frac{1}{2}+\frac{1}{2^{m}}\right),\quad k_m=k\left(1-\frac{1}{2^{m-1}}\right),\quad Q_m :=Q_{r_m}^{-}=(t_0-r_m ^2,t_0]\times (B_{r_m }(x_0) \cap \Omega),
\]
and let $\zeta_m:\bR^{n+1} \to [0,1]$ be smooth cut-off functions such that  $\zeta_m=1$ on $Q_{m+1}$, $\supp (\zeta_m) \cap \set{t \le t_0} \subset Q_{m}$, and 
\begin{equation*}				%\label{eq1747f}
\abs{\partial_t\zeta_m}+\abs{D\zeta_m}^2+ \abs{D^2\zeta_m} \le  \frac{100\cdot 2^{2m}}{r^2}.
\end{equation*}
By taking $\zeta=\zeta_m$, $r=r_m$, and $k=k_{m+1}$ in \eqref{eq1558w}, and then using obvious inequalities
\begin{equation}		\label{eq15.42sat}
(k_{m+1}-k_m)^2\, \abs{Q_m \cap \set{u>k_{m+1}}}=\int_{Q_m \cap \set{u>k_{m+1}}} \abs{u-k_m}^2  \le \int_{Q_m} (u-k_m)_{+}^2
\end{equation}
and
\begin{equation*}		%\label{eq15.43sat}
\norm{(u-k_{m+1})_{+}}_{L_2(Q_m)} \le \norm{(u-k_m)_{+}}_{L_2(Q_m)},
\end{equation*}
we have
\begin{multline}		\label{eq1452th}
\norm{(u-k_{m+1})_{+}}_{L_2(Q_{m+1})}^2\le \frac{C2^{2m}}{r^2}  \left(\frac{2^{2m}}{k^2}\right)^{\frac{2}{n+2}}\,\norm{(u-k_m)_{+}}_{L_2(Q_m)}^{\frac{2(n+4)}{n+2}}\\
+Cr^2 \norm{f}_{L_\infty(Q^{-}_r)}^2 \left(\frac{2^{2m}}{k^2}\right)^{\frac{n+4}{n+2}}\,\norm{(u-k_m)_{+}}_{L_2(Q_m)}^{\frac{2(n+4)}{n+2}}.
\end{multline}
Let us denote
\[
Y_m := \frac{1}{k^2 r^{n+2}} \norm{(u-k_m )_{+}}_{L_2(Q_m )}^2
\]
and assume 
\[
k \ge r^2 \norm{f}_{L_\infty(Q^{-}_r)}.
\]
Then, it follows from \eqref{eq1452th} that
\begin{align*}
Y_{m+1} &\le C 2^{\frac{2m(n+4)}{n+2}}Y_m^{\frac{n+4}{n+2}} +C 2^{\frac{2m(n+4)}{n+2}} k^{-2} r^4 \norm{f}_{L_\infty(Q^{-}_r)}^2 Y_m^{\frac{n+4}{n+2}} \\
& \le K2^{2m\frac{n+4}{n+2}} Y_m^{\frac{n+4}{n+2}},
\end{align*}
where $K=K(n,\nu, p, \Theta)>0$.
By a well-known lemma on fast geometric convergence (see, e.g., \cite[Lemma 15.1]{DiBenedetto}), it follows that $Y_m \to 0$ provided
\[
Y_1=\frac{1}{k^2 r^{n+2}} \norm{u_{+}}_{L_2(Q^{-}_r )}^2 \le \delta^2
\]
for some $\delta=\delta(n, K)=\delta(n, \nu, p, \Theta)>0$.
Therefore, by taking
\[
k= \max\left(r^2 \norm{f}_{L_\infty(Q^{-}_r)}, \delta^{-1} r^{-\frac{n+2}{2}} \norm{u_{+}}_{L_2(Q^{-}_r)}\right),
\]
we see that
\[
u\le k\quad\text{in }\;Q^{-}_{r/2}.
\]
By applying the same estimate to $-u$, we obtain \eqref{loc_bdd_est}.
\qed

\subsection{Case when $p=n$}
We now treat the case when $p=n$ and $q=\infty$.
We proceed the same as in the case when $p>n$ until we reach \eqref{est penul term}, where by using \eqref{eq_bmo_x}, we instead obtain
\begin{align}		\label{est hardy}
-\int_{Q^{-}_r} \zeta (b^i-c^i) v^2 D_i\zeta &=\int_{Q^{-}_r} \left(\Phi^{ij}- \overline\Phi{}^{ij}(t)\right)D_j  (\zeta v^2 D_i \zeta)	\nonumber\\
&=\int_{Q^{-}_r} \left(\Phi^{ij}- \overline\Phi{}^{ij}(t)\right) (v^2 D_j(\zeta D_i \zeta)+2v D_j  v \zeta D_i \zeta),
\end{align}
where we set
\[
\overline\Phi{}^{ij}(t) =\frac{1}{\abs{B_r}} \int_{B_r}\Phi^{ij}(t,x)\,dx.
\]
%We modify the argument in \cite[Lemma 2.4]{sssz} to bound the right-hand side of \eqref{est hardy}. 
Fix a number $s\in (2,\frac{2(n+2)}{n})$.
By using H\"{o}lder's inequality and the John-Nirenberg inequality, we estimate
\begin{align}\label{first est}
\int_{Q^{-}_r} &\left(\Phi^{ij}- \overline\Phi{}^{ij}(t)\right) v^2 D_j(\zeta D_i \zeta)dxdt\nonumber\\
&\le \norm{D_j(\zeta D_i\zeta)}_{L_\infty} \int_{Q^{-}_r} \abs{\Phi^{ij}- \overline\Phi{}^{ij}(t)} v^2 \,dxdt\nonumber\\
&\le \norm{D_j(\zeta D_i\zeta)}_{L_\infty} \left(\int_{t_0-r^2}^{t_0}\!\int_{B_r} \abs{\Phi^{ij}(t,x)- \overline\Phi{}^{ij}(t)}^{\frac{s}{s-2}}\,dx dt\right)^{\frac{s-2}{s}} \left(\int_{Q^{-}_r} \abs{v}^s\,dxdt\right)^{\frac{2}{s}}\nonumber\\
&\le \norm{D_j(\zeta D_i\zeta)}_{L_\infty} (r^2 \abs{B_r})^{\frac{s-2}{s}} \left(\fint_{t_0-r^2}^{t_0}\!\fint_{B_r} \abs{\Phi^{ij}(t,x)- \overline\Phi{}^{ij}(t)}^{\frac{s}{s-2}}\,dx dt\right)^{\frac{s-2}{s}}\norm{v}_{L_s(Q^{-}_r)}^2 \nonumber\\
&\le C\left(\norm{D\zeta}_{L_\infty}^2 +\norm{D^2 \zeta}_{L_\infty} \right) r^{\frac{(n+2)(s-2)}{s}} \Theta\norm{v}_{L_s(Q^{-}_r)}^2,
\end{align}
where $C=C(n,s)$.
Similarly, we estimate
\begin{align}\label{second est}
\int_{Q^{-}_r} &\left(\Phi^{ij}- \overline\Phi{}^{ij}(t)\right) v D_j v \zeta D_i \zeta\,dxdt \nonumber\\
&\le \left(\int_{t_0-r^2}^{t_0}\!\int_{B_r} \abs{\Phi^{ij}(t,x)- \overline\Phi{}^{ij}(t)}^{\frac{2s}{s-2}}\ dx\ dt\right)^{\frac{s-2}{2s}} \left(\int_{Q^{-}_r} \abs{\zeta Dv}^2\right)^{\frac{1}{2}} \left(\int_{Q^{-}_r} \abs{v D\zeta}^s\right)^{\frac{1}{s}} \nonumber\\
&\le C r^{\frac{(n+2)(s-2)}{2s}}\, \Theta \norm{\zeta Dv}_{L_2(Q^{-}_r)} \norm{D\zeta}_{L_\infty}  \norm{v}_{L_s(Q^{-}_r)} \nonumber\\
&\leq \frac{\nu}{8}\, \norm{\zeta Dv}_{L_2(Q^{-}_r)}^2+\frac{2C^2 \Theta^2}{\nu} r^{\frac{(n+2)(s-2)}{s}}\,\norm{D\zeta}_{L_\infty}^2  \norm{v}_{L_s(Q^{-}_r)}^2.
\end{align}
where we used Young's inequality at the last step.
	
Coming back to \eqref{est hardy} and using \eqref{first est} and \eqref{second est}, we obtain
\begin{multline}		\label{bound b-c}
-\int_{Q^{-}_r} \zeta (b^i-c^i) v^2 D_i \zeta \le  \frac{\nu}{8} \int_{Q^{-}_r} \zeta^2 \abs{Dv}^2 \\
+C(n,s, \Theta,\nu) \left(\norm{D\zeta}_{L_\infty}^2+\norm{D^2 \zeta}_{L_\infty}\right) r^{\frac{(n+2)(s-2)}{s}} \norm{v}_{L_s(Q^{-}_r)}^2.
\end{multline}
Using \eqref{bound b-c} instead of \eqref{est penul term}, we obtain
similar to \eqref{eq1811th} that 
\begin{multline*}		%\label{eq1006th}
\oldnorm{\zeta (u-k)_{+}}_{Q^{-}_r}^2\le C \left(\norm{\partial_t \zeta}_{L_\infty} + \norm{D\zeta}_{L_\infty}^2 +\norm{D^2\zeta}_{L_\infty} \right)r^{\frac{(n+2)(s-2)}{s}} \norm{(u-k)_{+}}_{L_s(Q^{-}_r)}^2\\
+Cr^{n+4-\frac{2(n+2)}{s}} \norm{f}_{L_\infty(Q^{-}_r)}^2 \,\abs{Q^{-}_r \cap \set{u>k}}^{\frac{2}{s}}.
\end{multline*}
Also, similar to \eqref{eq1320th}, we have
\begin{equation*}		%\label{eq1021th}
\norm{\zeta (u-k)_{+}}_{L_s(Q^{-}_r)}^2 \le \abs{Q^{-}_r \cap \set{u>k}}^{\frac{2}{s}-\frac{n}{n+2}}\,\norm{\zeta (u-k)_{+}}_{L_{2(n+2)/n}(Q^{-}_r)}^2.
\end{equation*}
Take $r_m$, $k_m$, $Q_m$, and $\zeta_m$ as before.
By setting $\zeta=\zeta_m$, $r=r_m$, and $k=k_{m+1}$ in the preceding two inequalities, applying the embedding \eqref{embd_ineq}, and using
\begin{equation*}		%\label{eq12.18th}
(k_{m+1}-k_m)^s\, \abs{Q_m \cap \set{u>k_{m+1}}} \le \int_{Q_m} (u-k_m)_{+}^s,
\end{equation*}
instead of \eqref{eq15.42sat}, we obtain
\begin{multline}		\label{eq1221th}
\norm{(u-k_{m+1})_{+}}_{L_s(Q_{m+1})}^2\le \frac{C2^{2m}}{r^2} r^{\frac{(n+2)(s-2)}{s}}\left(\frac{2^{sm}}{k^s}\right)^{\frac{2}{s}-\frac{n}{n+2}}\,\norm{(u-k_m)_{+}}_{L_s(Q_m)}^{4-\frac{sn}{n+2}}\\
+C r^{n+4-\frac{2(n+2)}{s}}  \norm{f}_{L_\infty(Q^{-}_r)}^2 \left(\frac{2^{sm}}{k^s}\right)^{\frac{4}{s}-\frac{n}{n+2}}\,\norm{(u-k_m)_{+}}_{L_s(Q_m)}^{4-\frac{sn}{n+2}},
\end{multline}
which corresponds to \eqref{eq1452th}.
Now, if we set
\[
Y_m := \frac{1}{k^2 r^{2(n+2)/s}} \norm{(u-k_m )_{+}}_{L_s(Q_m )}^2,
\]
then it follows from \eqref{eq1221th} that
\begin{align*}
Y_{m+1} &\le C 2^{4m-\frac{sn}{n+2}m} Y_m^{2-\frac{sn}{2(n+2)}}+ C r^4 \norm{f}_{L_\infty(Q^{-}_r)}^2 2^{2m+\frac{4s}{n+2}m} k^{-2}Y_m^{2-\frac{sn}{2(n+2)}}\\
&\le C 2^{2m+\frac{4s}{n+2}m} Y_m^{2-\frac{sn}{2(n+2)}}
\end{align*}
provided that $k \ge r^2 \norm{f}_{L_\infty(Q^{-}_r)}$.
By the same argument involving fast geometric convergence as above, we see that
\[
u\le C \left(r^2\norm{f}_{L_\infty(Q^{-}_r)}+r^{-\frac{n+2}{s}} \norm{u_+}_{L_s(Q^{-}_r)}\right)\quad\text{in }\;Q_{r/2}^{-}.
\]
By applying the same estimate to $-u$, and applying a well-known covering argument (see, e.g., \cite[pp.~80--82]{Giaquinta93}), we can replace the number $s$ by $2$ and get the estimate \eqref{loc_bdd_est}.
\qed

%%%%%%%%%%%%%%%%%%%%%%%%%%%%%%%%%%%%%%%

\end{document}